\documentclass[a4paper,reqno,11pt]{amsart}
\usepackage[utf8]{inputenc}
\usepackage[margin=0.8in]{geometry}
\usepackage[svgnames]{xcolor}
\usepackage[bookmarksnumbered=true]{hyperref} 
\hypersetup{
	colorlinks = true,
	linkcolor = blue,
	anchorcolor = blue,
	citecolor = teal,
	filecolor = blue,
	urlcolor = blue
}
\usepackage[english]{babel}
\usepackage{amsmath}
\usepackage{amsthm}
\usepackage{amssymb}
\usepackage{mathrsfs}
\usepackage{tikz-cd} 
\usepackage{bm}

\usepackage{hyperref}
\usepackage{caption}

\usepackage{cleveref}

\title{Voronoi diagrams of algebraic varieties under polyhedral norms}
\author[A. Becedas]{Adrian Becedas}
\address{Department of Mathematics, KTH Royal Institute of Technology, Stockholm, Sweden}
\email{adrian.becedas@gmail.com}
\author[K. Kohn]{Kathl\'en Kohn}
\address{Department of Mathematics, KTH Royal Institute of Technology, Stockholm, Sweden}
\email{kathlen@kth.se}
\author[L. Venturello]{Lorenzo Venturello}
\address{Department of Mathematics, KTH Royal Institute of Technology, Stockholm, Sweden, and \\ Dipartimento di Matematica, Universit\`{a} di Pisa, Pisa, Italy}
\email{lven@kth.se, lorenzo.venturello@unipi.it}
\date{September 2022}

\usepackage{graphicx,stackengine}
\usepackage{float}

\newcommand{\inte}{\mathrm{int}}
\newcommand{\A}{\mathbb{A}}

\hypersetup{pdflinkmargin=2pt}

\newtheorem{theorem}{Theorem}[section]
\newtheorem{proposition}[theorem]{Proposition}
\newtheorem{corollary}[theorem]{Corollary}
\newtheorem{lemma}[theorem]{Lemma}

\theoremstyle{definition}

\newtheorem{definition}[theorem]{Definition}
\newtheorem{remark}[theorem]{Remark}
\newtheorem{example}[theorem]{Example}

\newtheorem{problem}[theorem]{Problem}

\begin{document}

\maketitle

\begin{abstract}
   We study Voronoi diagrams of manifolds and varieties with respect to polyhedral norms.
   We provide upper and lower bounds on the dimensions of Voronoi cells. 
   For algebraic varieties, we count their full-dimensional Voronoi cells.
   As an application, we consider the polyhedral Wasserstein distance between discrete probability distributions.

\end{abstract}

%\tableofcontents

	\section{Introduction}
	
	Given a metric space $(M,f)$ and a subset $X\subseteq M$, its \emph{Voronoi diagram} is the collection of \emph{Voronoi cells}  indexed by points of $X$.  The cell corresponding to $x\in X$ is given by all points of $M$ which are closer to $x$ than to any other point in $X$. 
	The first appearance of this idea in the sciences can be traced back to the work of Descartes and his interpretation of the solar system as a union of convex regions corresponding to fixed stars. 
	At the beginning of the 20th century, the mathematician Georgy Fedoseevich Voronoi gave a formal definition of the diagrams which now carry his name. 
	Since then, Voronoi diagrams found a long list of applications and specializations in different areas: from electrical engineering, network and data analysis to medicine.
	
	The most popular version of Voronoi diagrams consists of the following choice of ingredients: $M=\mathbb{R}^n$, $f$ is the Euclidean distance on $M$, and $X$ is a finite set of points or a lattice. %In this setting there has been a lot of attention on the algorithmic and computational aspects. 
	However, our focus is on a different setting:
	
	We let the ambient space $M$ to be an affine space with a a \emph{polyhedral distance}, i.e., a distance for which the unit balls are convex polytopes. 
	We shall not only consider countable subsets $X$ of points, but higher dimensional manifolds and varieties. 
	This choice produces a Voronoi diagram with an infinite number of cells as these are indexed by points in $X$.
	
	Even though most of our technical results hold true in this general setting, we will focus on the following motivating application. 
	Let us consider the affine hyperplane $\mathbf{1}_n:=\{(t_1,\dots,t_{n+1}): \sum_i t_i = 1\}\subseteq\mathbb{R}^{n+1}$
	and the \emph{probability simplex} $\Delta_{n}:=\mathbf{1}_n\cap \mathbb{R}^{n+1}_{\geq 0}$.
 The latter is the space of discrete probability distributions on $n+1$ states. 
 We consider a \emph{Wasserstein distance} $W_d$ on $\mathbf{1}_n$ (or its restriction to $\Delta_{n}$). 
 The distance $W_d$ is determined by a metric $d$ on the finite set of states $\{1,\dots,n+1\}$. 
 We can interpret $d$ as describing the cost of transporting a unit of mass from one state to another. Then, the Wasserstein distance $W_d(\mu,\nu)$ is the minimal cost of transforming the distribution $\mu$ into the distribution $\nu$ by moving mass between the different states. 
 
 In more mathematical terms, $W_d(\mu,\nu)$ can be computed by optimizing a linear cost function (determined by $d$ alone) over the convex \emph{transportation polytope} (determined by $\mu$ and $\nu$). 
 The notion of Wasserstein distance is at the core of the field of \emph{optimal transport} \cite{villani08}, and gained recently popularity in \emph{machine learning}, where it has been successfully employed as a loss function for generative adversarial networks (WGANs) \cite{arjovsky2017wasserstein,10.5555/2969442.2969469}. 
 For us, the key observation in the setting of discrete probability distributions is that Wasserstein unit balls are convex polytopes, known in the literature as \emph{Kantorovich-Rubinstein} or \emph{fundamental} polytopes \cite{Ver, GP}.
	
Many statistical applications deal with the problem of 
determining which distribution in a given \emph{statistical model} $X \subsetneq \Delta_n$ best explains some observed data.
That problem can typically be phrased as finding a  distribution in $X$ that is closest to an observed distribution.
 In \cite{OJMSV,CJMSV}, the authors study the problem of describing and computing the Wasserstein distance to a model $X$ that is an \emph{algebraic variety}, i.e., the zero locus of a finite set of polynomials. This assumption is satisfied by a wide variety of models in statistics, such as models of independence and discrete exponential families. Our study can be seen as a reformulation of the question above:
	\begin{center}
		\emph{Given a point $x$ in a statistical model $X$, what is the set of empirical distributions that are better explained, in the sense of Wasserstein distance, by $x$ than by any other point in the model?}
	\end{center}
	
	In \cite{CIFUENTES2020}, the authors study a similar question, when the distance is the Euclidean one. They prove that the Euclidean Voronoi cell at a smooth point $x\in X$ is convex, and contained in the normal space of $X$ at $x$. 
	Moreover, this cell is full-dimensional in the normal space. If the point $x$ is singular, then the corresponding Voronoi cell can be full-dimensional in the ambient space of $X$. 
	In contrast, when using Wasserstein distances, we observe that full-dimensional cells can occur even at smooth points. 
	This is due to the geoemetric difference between the corresponding unit balls: while the boundary of the Euclidean unit ball is smooth, a convex polytope has only a piecewise smooth boundary. 
	Other related works study the special case of plane curves \cite{brandt2019voronoi}, 
	Voronoi cells arising from maximum likelihood estimation \cite{alexandr2021logarithmic,alexandr2022logarithmic}, and Voronoi diagrams of points in the context of tropical geometry \cite{CJS}. The distance considered in \cite{CJS} corresponds to a Wasserstein distance (for a specific choice of $d$) via  Kantorovich-Rubinstein duality.
	
	After the necessary preliminaries and definitions in Section \ref{sec: 2}, we study the dimensions of Voronoi cells depending on the geometry of the set $X$ in Section \ref{sec: 3}. 
	We consider a specific example \Cref{sec: 4}: We compute the number of full-dimensional Voronoi cells of the \emph{Hardy-Weinberg} curve in $\Delta_2$ with respect to \textit{every} Wasserstein distance. Finally, in \Cref{sec: 5}, we provide an upper bound for the number of full-dimensional Voronoi cells when the set $X$ is an algebraic variety. Although for a general polyhedral distance this bound depends also on the combinatorics of the unit ball, in the case of Wasserstein distances it can be relaxed and made dependent only on the dimension of the ambient space and on the geometry of the variety.

%\section{Introduction and Outline}
%In the second chapter of this thesis we introduce the concept of the Wasserstein distance. We give some historical background and define the Wassertein distance balls. We also provide some background on Voronoi diagrams and define Voronoi cells.

%The third chapter is dedicated to understanding Voronoi cells w.r.t Wasserstein distances. We introduce face-cones which allow us to break down Voronoi cells into more manageable components. We prove a series of results which produce upper and lower bounds for dimensions of Voronoi cells depending on the orientation of faces of the Wasserstein distance ball.

%We spend the fourth chapter more thoroughly exploring the Voronoi diagrams of the statistically significant Hardy-Weinberg curve in the simplex w.r.t different Wasserstein metrics.

%The final chapter is where we find an upper bound for the number of full-dimensional Voronoi cells for smooth irreducible varieties with hypersurfaces as dual varieties. We use dual varieties and polar degrees for the upper bound w.r.t. a generic  Wasserstein distance. We also reference adjecent research and properties of generic distance balls and provide an example when our bound is sharp.
%\newpage

\section{Polyhedral norms and Wasserstein distances}
\label{sec: 2}
Let $\A^n$ denote a real affine $n$-dimensional  space. We will consider affine $n$-dimensional spaces embedded in $\mathbb{R}^{n+1}$. For a metric $D$ on $\A^n$, we consider the \emph{closed $D$-balls} $B_{D,r}(c) := \lbrace x\in \A^n \,:\, D(x,c) \leq r \rbrace$ with center $c \in \A^n$ and radius $r \geq 0$.
If the center and radius of a closed ball are not relevant, we simply write $B_D$.
\begin{definition}
    The metric $D$ is a \emph{polyhedral distance} if it is translation invariant and the $D$-balls $B_{D,r}(c)$ of positive radius are convex $n$-dimensional polytopes. 
\end{definition}
We note that the polytopes $B_{D,r}(c)$ associated with a polyhedral distance are centrally symmetric.
Hence, each face $F$ of the polytope $B_{D,r}(c)$ has an \emph{opposite face}, which we denote by $-F$.
Basic examples of polyhedral distances are those induced by the $L_1$- and the $L_{\infty}$-norm on $\mathbb{R}^n$.

%According to \textit{Encyclopedia of Mathematics}  \cite{wasser} the Wasserstein distance first appears in writing as "Vasershtein distance" in 1970 in a discussion on probability measures. The name then referred to a paper from 1969 by Vasershtein on Markov processes. However the earliest appearence of the distance was in 1940 in a paper by Kantorovich on transportation problems.

Another large family of polyhedral distances is given by Wasserstein distances, defined on $\mathbf{1}_n$.
Let $d$ be a metric on the finite set $[n+1]:=\{1,\dots,n+1\}$.
We can naturally identify $d$ with a symmetric matrix $d=(d_{ij})\in\mathbb{R}_{\geq 0}^{(n+1)\times(n+1)}$ that satisfies the triangular inequalities $d_{ij}\leq d_{ik}+d_{kj}$ for every $1\leq i,j,k\leq n+1$. Given $\mu, \nu\in \mathbf{1}_n$, the \emph{transportation polytope} from $\mu$ to $\nu$ is

\[
    \Pi(\mu,\nu) :=\left\lbrace y\in \mathbb{R}_{\geq 0}^{(n+1)\times(n+1)} \; : \; \forall\, 1\leq i,j\leq n+1, \; \sum_{k=1}^{n+1} y_{ik}=\mu_i \text{ and } \sum_{k=1}^{n+1} y_{kj}=\nu_j\right\rbrace.
\]
\begin{definition}
    The \emph{Wasserstein distance} $W_d(\mu,\nu)$ of $\mu$ and $\nu$ associated with $d$ is the solution to the following linear optimization problem: 
\begin{align*}
    &\text{minimize} \sum_{i,j=1}^{n+1} d_{ij}x_{ij}\\
    &\text{subject to } x=(x_{ij})\in\Pi(\mu,\nu).
\end{align*}
\end{definition}
If we interpret $\mu$ and $\nu$ as mass distributions, $x$ can be thought of as a transportation plan, in which the entry $x_{ij}$ indicates the amount of mass that is transported from $\mu_i$ to $\nu_j$ in order to transform one distribution into the other. The matrix $d$ determines the cost of each transportation step.
%The unit balls associated with the metric $W_d(\cdot,\cdot)$ on $\mathbf{1}_n$ are the following centrally symmetric convex polytopes.

The \emph{Wasserstein balls} (i.e.,  the closed $W_d$-balls associated with a Wasserstein distance $W_d$ on $\boldsymbol{1}_n$) can be expressed as a convex hull of $2\binom{n+1}{2}$ points:
\begin{align*}
%    \label{eq:wassersteinball}
    B_{W_d,r}(c)=\text{conv}\left\lbrace c+r\frac{e_i-e_j}{d_{ij}}~:~ ~i,j\in [n+1], i\neq j\right\rbrace,
\end{align*}
where $e_i$ denotes the $i$-th vector of the standard basis of $\mathbb{R}^{n+1}$.
Note that those $2\binom{n+1}{2}$ points do not need to be in convex position, as the following example demonstrates.

\begin{example}
\label{example1}
\Cref{fig:hexagonball} shows the Wasserstein balls of radius $r=\frac{1}{3}$ and center $c=(\frac{1}{3},\frac{1}{3},\frac{1}{3})$ associated with the distances
\[
d_1 = \begin{pmatrix}
0 & 1 & 1\\
1 & 0 & 1\\
1 & 1 & 0
\end{pmatrix} \quad \text{and} \quad 
d_2 = \begin{pmatrix}
0 & 1 & 2\\
1 & 0 & 1\\
2 & 1 & 0
\end{pmatrix}.
\]

\begin{figure}[H]
    \centering
   \includegraphics[scale=0.5]{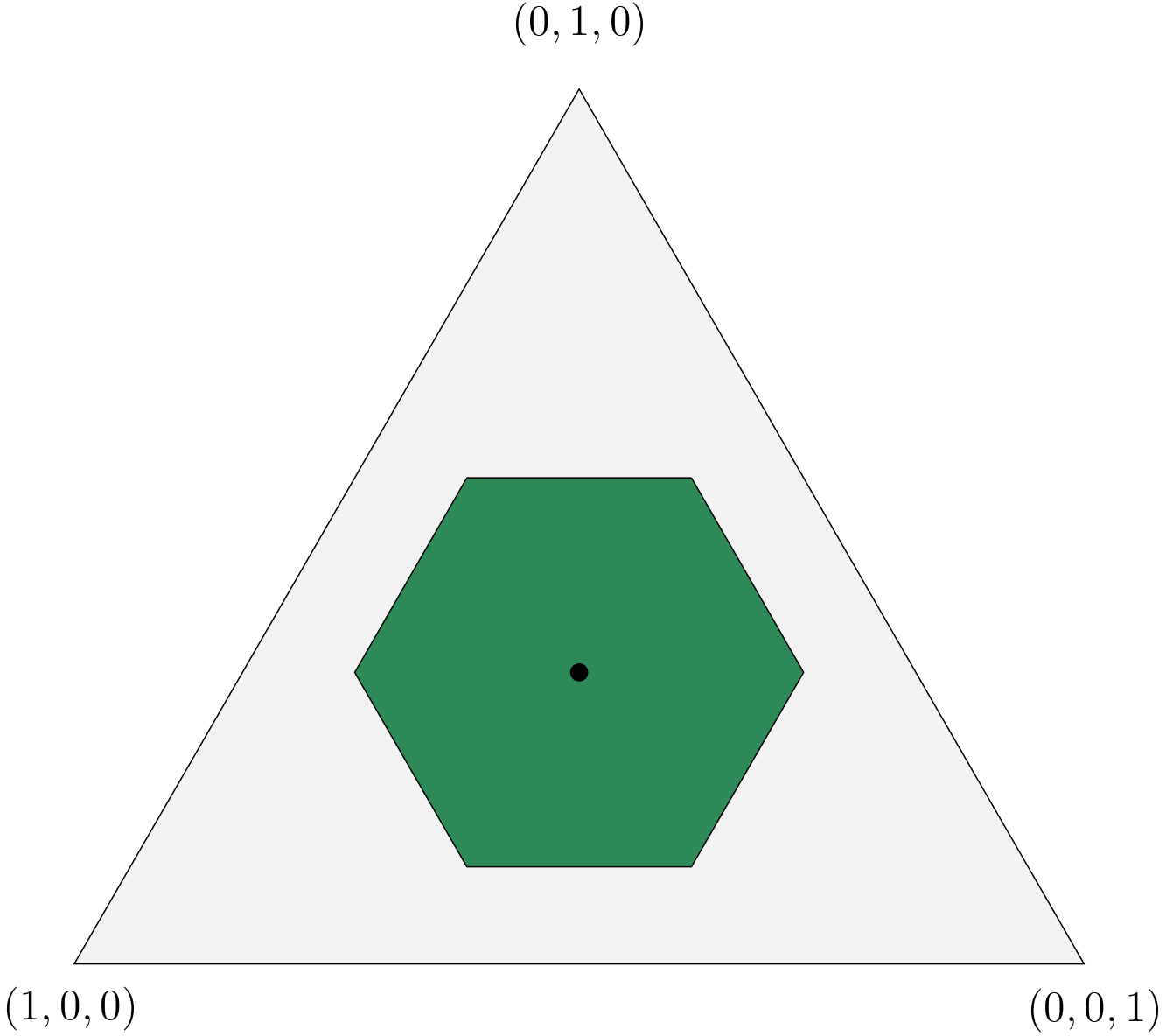} 
   \quad\quad
   \includegraphics[scale=0.5]{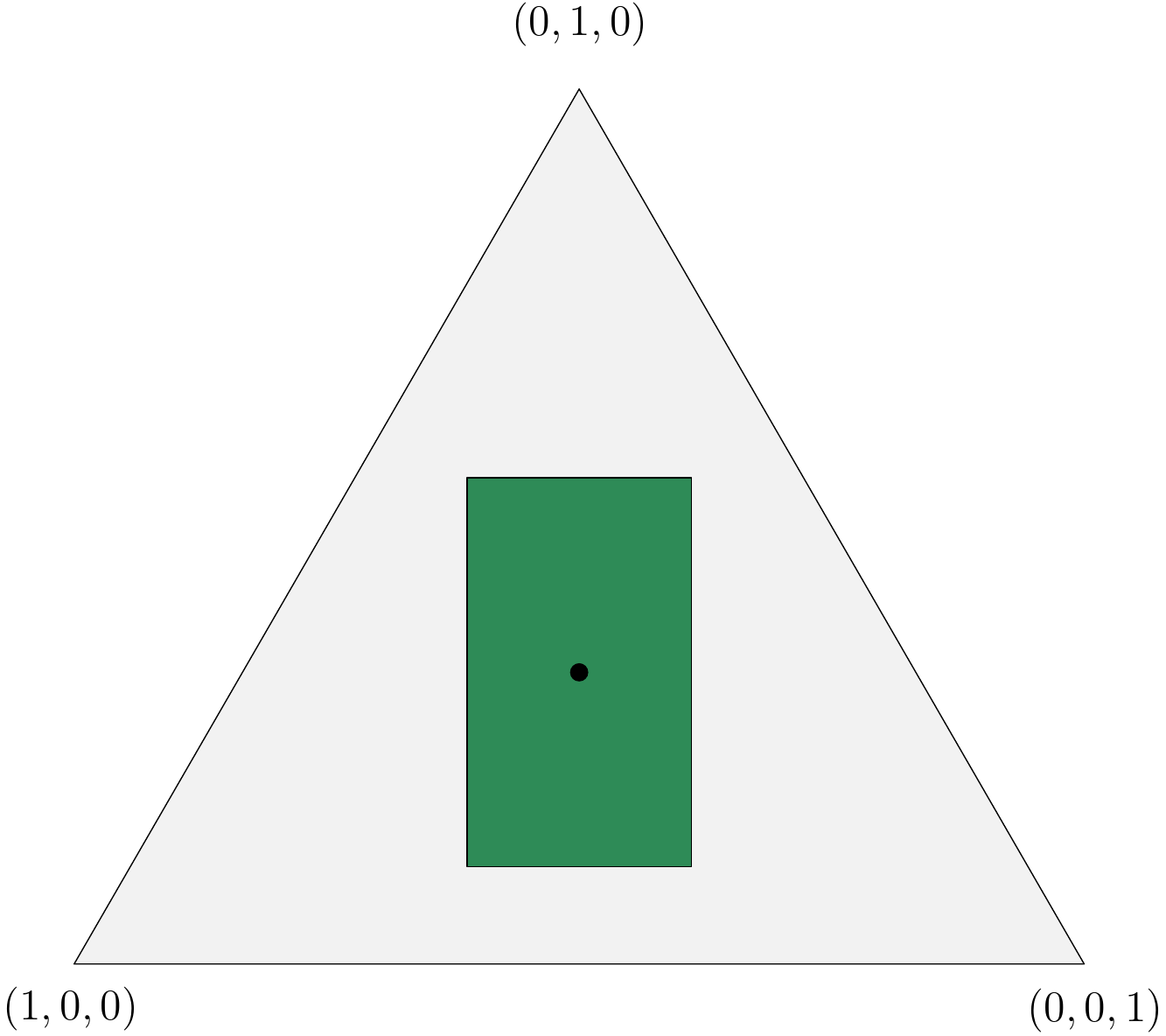}
    \caption{Wasserstein balls $B_{W_{d_1},\frac{1}{3}}(\frac{1}{3},\frac{1}{3},\frac{1}{3})$ and $B_{W_{d_2},\frac{1}{3}}(\frac{1}{3},\frac{1}{3},\frac{1}{3})$.}
    \label{fig:hexagonball}
\end{figure}

These are all cases for the possible number of sides of Wasserstein balls in the plane $\mathbf{1}_2$. Indeed, planar Wasserstein balls are the convex hulls of six points. As centrally symmetric polytopes have an even number of vertices, that number must be either four or six.
\end{example}

%\begin{proposition}
%\label{ballshape}
%Wasserstein distance balls in $\mathbf{1}_2$ are polygons with six or four sides or, when the radius is 0, a single point. 
%\end{proposition}
%\begin{proof}
%Since the Wasserstein distance balls $B_{D,r}(c)$ in $\mathbf{1}_2$ are the convex hulls of 6 points in a plane they can only be 6,5,4 or 3 sided polygons, a line segment or a point.
%Because the 6 points are endpoints of evenly spaced vectors around c, with length $\frac{r}{d_{i,j}}$ the line segment can be ruled out and the point only achieved when r=0.
%The symmetry of the vectors $c+\frac{u_i-u_j}{d_{i,j}},c+\frac{u_j-u_i}{d_{j,i}}$ means we have an even number of edges/vertices when $r\neq 0$.
%\end{proof}

%\begin{proof}
%If a point x is closer to a point y on the curve than any other point on the curve then some ball around x contains y and no other point on the curve. If in addition the tangent plane at x is parallel to some face on the ball(orientation?) then the ball of half radius centered halfway between x and y will be contained in the larger ball and contain x, any translation of the ball along the tangent plane of at most half the original diameter will still be contained in the original ball and contain x.
%\end{proof}

\section{Dimensions of Voronoi cells}
\label{sec: 3}
Our main object of study is the Voronoi decomposition of a set $X$ under a polyhedral distance.
We are particularly interested in the case of an algebraic variety $X$ under a Wasserstein distance. 
This section is devoted to finding  lower and upper bounds on the dimensions of the Voronoi cells.

\begin{definition}
Let $(M,f)$ be a metric space and $X\subseteq M$ be a set.
The (open) \emph{Voronoi cell} $V_{f,X}(x)$ of a point $x\in X$ is the set $$V_{f,X}(x)=\{y\in M ~:~ \forall x'\in X \setminus \lbrace x \rbrace, f(x,y)< f(x',y)\}.$$

\end{definition}

From now on we fix a polyhedral distance $f=D$ on real affine $n$-space $M = \A^n$.
An important tool in our analysis are the face cones of the closed $D$-balls $B_{D,r}(c)$.
The \emph{face cone} of a face $F$ is the cone over the opposite face $-F$.
For the formal definition, we denote by $\inte(\cdot)$ the relative interior.

%We now recall a few definitions from discrete geometry. A \emph{face} $F$ of a polytope $P$ is a subset of $P$ which is the minimizer of any linear functional. The \emph{dimension} $\dim F$ of a face $F$ is the dimension of its affine hull $\text{span}(F)$.

\begin{definition}
\label{conedef}
Let $F$ be a non-empty face of the polytope $B_{D,r}(x)$ for some  $x\in \mathbf{1}_n$ and $r \geq 0$. 
The \emph{face cone} of $F$ at $x$ is the set  $$C_{F}(x):=\{x+\delta(s-x) \; : \; s\in \inte(-F), \delta>0\}.$$  
We set $C_{\emptyset}(x)=\{x\}$. We also define the \emph{truncated face cone} for some $\varepsilon > 0$ as follows: $$C_{F,\varepsilon}(x)=\{x+\delta(s-x) \; : \; s\in \inte(-F), \varepsilon>\delta>0\}.$$
\end{definition}
\begin{figure}[h]
    \centering
    \includegraphics[scale=0.5]{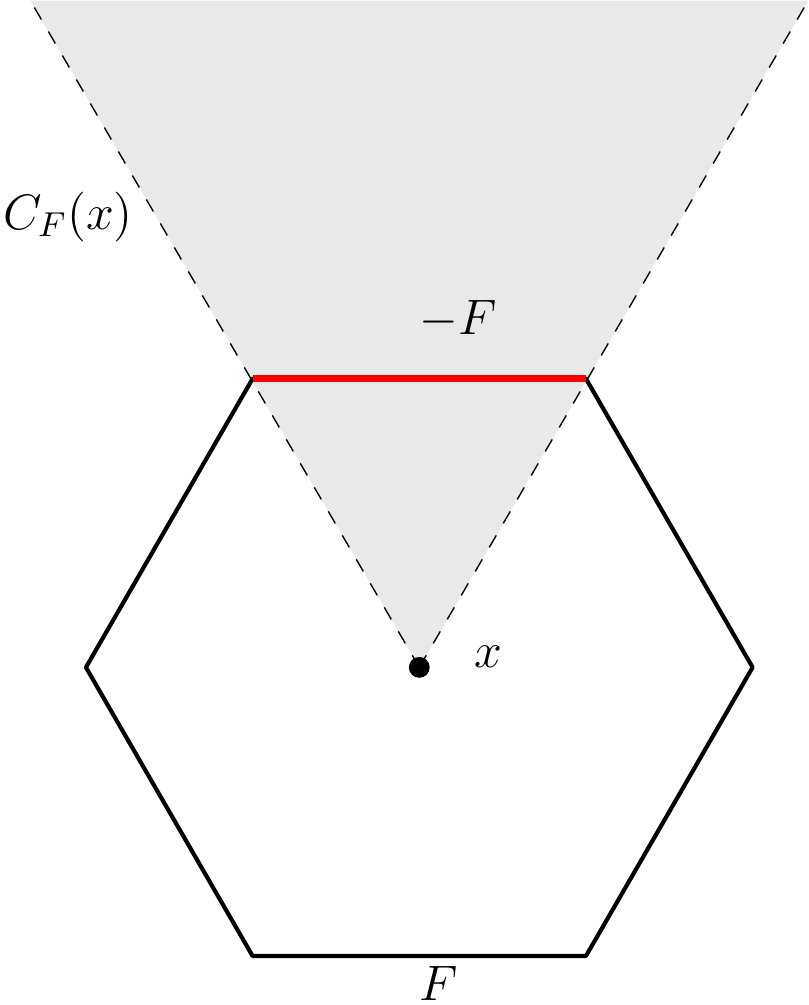}
    \caption{Illustration of the face cone as in Definition \ref{conedef}.}
    \label{fig:cone}
\end{figure}

The face cone of a face $F$ at a point $x$ is the set of all points such that the $D$-ball of some (positive) radius  around these points intersects $x$ at $F$; see Figure \ref{fig:cone}.
We note that the definition of the face cone $C_F(x)$ does not depend on the radius $r$ of the ball $B_{D,r}(x)$ while the truncated face cone $C_{F,\varepsilon}(x)$ does. 
Moreover, we observe that $C_{F,\varepsilon}(x)\subseteq B_{D,\varepsilon}(x).$
%Having the ball $B_{D,r}(x)$ with which $C_f(x)$ is defined be centered at the same point $x$ as the face-cone is not strictly necessary as the vector $s-x$ could be replaced with a vector from the center of any ball to a point on that balls face. 
Most notably, the face cones of the distinct faces of a $D$-ball centered at $x$ partition the Voronoi cell $V_{D,X}(x)$ of $x$, i.e.,
\begin{align}\label{eq: decomposition}
%     V_{W_d,X}(x) = \dot\bigcup\limits_c \left( V_{W_d,X}(x) \cap C_F(x) \right).
 V_{D,X}(x) = \underset{F \subsetneq B_{D,r}(x) \text{ face} }{\dot\bigcup} \left( V_{D,X}(x) \cap C_F(x) \right).
\end{align}
We make use of this in a first dimension estimate of Voronoi cells.
%The following fact shows the usefulness of knowing in which face-cone a point of the Voronoi cell lies.

\begin{proposition}
\label{nocon}
Let $X\subseteq \A^n$ be a subset and let $x\in X$ be a point. 
Assume that $y\in V_{D,X}(x)$ with $y\neq x$, and let $\varepsilon>0$ be the radius such that  $\partial B_{D,\varepsilon}(y)\cap  X = \{x\}$. Let $F$ be the unique face of $B_{D,\varepsilon}(y)$ whose relative interior contains $x$. Then
$
    \dim V_{D,X}(x) \geq \dim F + 1.
$
\end{proposition}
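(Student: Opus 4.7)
The plan is to exhibit a $(\dim F + 1)$-dimensional subset of $V_{D,X}(x)$ by perturbing $y$ inside the face cone at $x$ whose opposite face is parallel to $F$. By translation invariance of $D$, write $B_{D,\varepsilon}(y) = y + \varepsilon B_0$, where $B_0$ is the unit $D$-ball at the origin; then $F = y + \varepsilon F_0$ for some face $F_0$ of $B_0$, and setting $u := (x-y)/\varepsilon$, the hypothesis on $x$ gives $u \in \inte(F_0)$.

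For $t \in (0,1)$ and $w$ in a small neighborhood of $0$ in the linear subspace $L$ parallel to $\mathrm{aff}(F_0)$, I define $p(t, w) := y + t(x - y) + \varepsilon w$. The main technical ingredient is that, for $w$ sufficiently small, both $u - w/(1-t)$ and $u + w/t$ still lie in $\inte(F_0) \subset \partial B_0$, so their $D$-norms equal $1$. This yields the exact distance computations $D(p, y) = t\varepsilon$ and $D(p, x) = (1-t)\varepsilon$; equivalently, $p(t,w)$ lies in the face cone $C_{F'}(x)$ for $F' := x + \varepsilon F_0$, and its distances to $y$ and $x$ add up to $\varepsilon = D(y,x)$.

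Membership $p(t, w) \in V_{D,X}(x)$ then follows from the triangle inequality combined with the defining strict inequality $D(y, x') > \varepsilon$ for every $x' \in X \setminus \{x\}$: indeed, $D(p, x') \geq D(y, x') - D(p, y) > \varepsilon - t\varepsilon = D(p, x)$. The dimension count reduces to showing that the affine map $(t, w) \mapsto y + t(x - y) + \varepsilon w$ has rank $\dim F + 1$, which is equivalent to $u \notin L$. Central symmetry of $B_0$ provides this: since $0 \in \inte(B_0)$, the origin is not in the affine hull of any proper face $F_0$, so $\mathrm{aff}(F_0) \cap L = \emptyset$, and in particular $u \in \inte(F_0)$ cannot lie in $L$.

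I expect the main subtlety to be the simultaneous smallness requirement on $w$: the two exact distance equalities demand neighborhoods of $0$ in $L$ that shrink as $t \to 0$ or $t \to 1$. This is routine to handle by restricting $t$ to a closed subinterval of $(0,1)$ and then choosing a uniform neighborhood of $0$ in $L$ that works for every $t$ in that subinterval; the resulting image still has the required dimension $\dim F + 1$ and lies entirely in $V_{D,X}(x)$.
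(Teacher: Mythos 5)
Your proof is correct and follows essentially the same route as the paper: the set you parametrize by $(t,w)\mapsto y+t(x-y)+\varepsilon w$ is exactly (a truncation of) the intersection $C_{-F,\varepsilon}(y)\cap C_{F}(x)$ of the two face cones used in the paper, and both arguments rest on the exact additivity $D(x,y)=D(x,p)+D(p,y)$ inside that cone together with the triangle inequality against $D(y,x')>\varepsilon$, followed by the same dimension count in the affine span of $F$ and $y$. Your version just makes explicit the norm-one computations and the uniform smallness of $w$, which the paper leaves implicit.
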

\begin{proof}
We claim that $C_{-F,\varepsilon}(y)\cap C_{F}(x)\subseteq V_{D,X}(x)$. Let $z$ be a point in the intersection. Since both $x\in C_{-F}(y)$  and $z\in C_{-F,\varepsilon}(y)\cap C_{F}(x)$, the polyhedral distances 
$D(x,y)$, $D(x,z)$ and $D(y,z)$ are measured evaluating the same linear functional. In particular, it holds that $D(x,y)=D(x,z)+D(y,z)$. %This holds since for a point $z$ in the intersection, there exists a $\delta>0$ such that $x\in B_{d,\delta}(z)\subset B_{d,\varepsilon}(y)$, which together with the hypothesis $B_{d,\varepsilon}(y)\cap  X = \{x\}$, implies that $B_{d,\delta}(z)\cap  X = \{x\}$.
%To prove the claim we let $\delta=W_d(z,x)$ and $p\in B_{d,\delta}(z)$. We then have $W_d(y,p)\leq W_d(y,z)+W_d(z,p)\leq (\varepsilon-\delta)+\delta=\varepsilon$, which follows from the fact that  {\color{red}$W_d(y,z)+W_d(x,z)=W_d(y,x)=\varepsilon.$ THIS EQUATION NEEDS TO BE BETTER JUSTIFIED.}\\
Assume now that there exists a point $x'\in X$ with $D(x',z)\leq D(x,z)$. Then by the triangular inequality we obtain that 
\begin{align*}
    D(x',y) & \leq D(x',z) + D(y,z)\\
    &\leq D(x,z) + D(x,y)-D(x,z)=D(x,y),
\end{align*}
which contradicts the fact that $y\in V_{D,X}(x)$.

Since  the relative interior of the line segment from $x$ to $y$ lies in both $C_{-F,\varepsilon}(y)$ and $C_{F}(x)$, their intersection is  nonempty.
Moreover, inside the $(\dim F+1)$-dimensional affine space spanned by $F$ and $y$, both $C_{-F,\varepsilon}(y)$ and $C_{F}(x)$ are full-dimensional and open, and so is their intersection.
Hence, the dimension of the Voronoi cell  $V_{D,X}(x)$ is at least $\dim F+1$.
\end{proof}

%{\color{red}\Cref{nocon} shows that finding points in a Voronoi cell belonging to a certain face cone gives information about the dimension of the Voronoi cell. 

The lower bound in \Cref{nocon} depends on finding a point in the Voronoi cell $V_{D,X}(x)$.
We now give another lower bound that only depends on the faces of the $D$-balls. 

We do this by splitting an open neighbourhood around $x$ with a hyperplane that leaves $X$ entirely on one side. 
For this, we consider a Euclidean ball in $\mathbb{R}^{n+1}$  centered at a point $x \in \A^n$ of radius $r$ and denote with $\beta_r(x)$ its restriction to $\A^n$. 

For a proper face $F$ of a $D$-ball,
we define a family of parallel hyperplanes in $\A^n$ as follows:
Let the $D$-ball be centered at $c \in \A^n$.
We denote by $v_F$ the  outer orthonormal vector to the face $F-c=\{ p-c ~:~ p\in F\}$ in the linear space $\text{span}((F-c)\cup \{\mathbf{0}\})$. Here the inner product that defines $v_F$ is the one induced by the standard inner product on $\mathbb{R}^{n+1}$, in which $\A^n$ is embedded. 
We write $H_F$ for the hyperplane in the linear space $\A^n-c$ that is the orthogonal complement of $v_F$.
Finally, for any point $x\in\A^n$,  we define $H_{F,x} := H_F+x$ for the hyperplane in $\A^n$ that passes through $x$. 
If $\dim F = k$, then we call $H_{F,x}$ a \emph{$k$-face hyperplane} through $x$. Note that the definition of $H_{F,x}$ does not depend on the center $c$, and that $H_{F,x} = H_{-F,x}$

%Under certain conditions we can think of the hyperplane $H_{f,x}$ passing through a point $x$ defined as the orthogonal complement of $v_f$ to separate a distance ball and the manifold $x$ belongs to; see Figure \ref{fig:3.4}. 
\begin{proposition}
\label{maximum}
Let $X\subseteq \A^n$ be a subset and $x\in X$ be a point.
If there exists a radius $r>0$ and a $k$-face hyperplane through $x$ that bisects the Euclidean ball $\beta_{r}(x)$ in such a way that the closure of one of the resulting half-balls intersects $X$ only at $x$,
then 
$
    \dim V_{D,X}(x) \geq k + 1.
$

\end{proposition}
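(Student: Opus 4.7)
The plan is to exhibit a $(k+1)$-dimensional open subset of $V_{D,X}(x)$ of the form $C_{-F,\varepsilon}(x)$ for a suitable $\varepsilon>0$. Since $H_{F,x}=H_{-F,x}$ and $-F$ is also a $k$-face, after possibly swapping $F$ and $-F$, I may assume the closed half-ball of $\beta_r(x)$ on the $+v_F$ side meets $X$ only at $x$; equivalently, every $x'\in X\setminus\{x\}$ satisfies
\[
\langle v_F,x'-x\rangle<0 \quad \text{or} \quad \|x'-x\|_2>r.
\]

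Rescaling so that $F$ is a face of $B_{D,1}(x)$, set $h:=\langle v_F,p-x\rangle$ for any $p\in F$; this is a positive constant by the definition of $v_F$. Any $y=x+\delta(s-x)\in C_{-F}(x)$ with $s\in\inte(F)$ and $\delta>0$ then satisfies $D(y,x)=\delta$ and $\langle v_F,y-x\rangle=\delta h$, so the whole cone sits strictly on the $+v_F$ side of $H_{F,x}$ and is a $(k+1)$-dimensional open subset of the affine subspace $x+\mathrm{span}((F-x)\cup\{\mathbf{0}\})$. It thus suffices to show that $C_{-F,\varepsilon}(x)\subseteq V_{D,X}(x)$ for some $\varepsilon>0$.

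Fix $y$ as above and $x'\in X\setminus\{x\}$. If $\langle v_F,x'-x\rangle<0$, then $v_F/h$ lies in the polar dual $B_{D,1}(0)^{\circ}$ (since $h=\max_{w\in B_{D,1}(0)}\langle v_F,w\rangle$), giving
\[
D(y,x')\ \geq\ \tfrac{1}{h}\bigl(\delta h+\langle v_F,x-x'\rangle\bigr)\ >\ \delta\ =\ D(y,x).
\]
If instead $\|x'-x\|_2>r$, the equivalence of $D$ and the Euclidean norm on $\A^n$ yields some $c>0$ with $D(p,q)\geq c\|p-q\|_2$, so $D(x,x')>cr$, and the triangle inequality gives $D(y,x')>cr-\delta$, exceeding $\delta$ provided $\delta<cr/2$. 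Choosing $\varepsilon\leq cr/2$ handles both situations at once, so $C_{-F,\varepsilon}(x)\subseteq V_{D,X}(x)$ and $\dim V_{D,X}(x)\geq k+1$.

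The main obstacle is the second case: it cannot be absorbed into the duality bound, because an $x'\in X$ outside $\beta_r(x)$ may still satisfy $\langle v_F,x'-x\rangle\geq 0$, making that bound vacuous. The triangle-inequality argument forces $\varepsilon$ to be small relative to $c$ and $r$, but this is harmless since the truncated cones $C_{-F,\varepsilon}(x)$ remain nonempty and $(k+1)$-dimensional for every $\varepsilon>0$.
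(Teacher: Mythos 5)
Your reduction to the inclusion $C_{-F,\varepsilon}(x)\subseteq V_{D,X}(x)$ breaks at the duality step in Case~1, and this is a genuine gap rather than a fixable technicality. The vector $v_F$ is only defined as the outer unit normal to $F$ inside the $(k+1)$-dimensional space $\mathrm{span}((F-x)\cup\{\mathbf{0}\})$; for $k<n-1$ the hyperplane $\{w:\langle v_F,w-x\rangle=h\}$ need not be a supporting hyperplane of $B_{D,1}(x)$, i.e.\ $h$ can be strictly smaller than $\max_{w\in B_{D,1}(0)}\langle v_F,w\rangle$, so $v_F/h$ need not lie in the polar dual and the inequality $D(y,x')\geq\tfrac1h\langle v_F,y-x'\rangle$ fails. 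Concretely, let $n=2$ and let $B_{D,1}(0)$ be the centrally symmetric hexagon with vertices $\pm(1,0)$, $\pm(\tfrac32,1)$, $\pm(0,1)$, and let $F$ be the vertex $x+(1,0)$, so $k=0$, $v_F=(1,0)$, $h=1$, while $\max_{w\in B_{D,1}(0)}\langle v_F,w\rangle=\tfrac32$. Worse, your target inclusion itself is false in this setting: put $u=(1,0)-\tfrac9{10}(\tfrac32,1)=(-\tfrac7{20},-\tfrac9{10})$ and $X=\{x\}\cup\{x+tu:0<t\leq1\}$. All of $X\setminus\{x\}$ lies strictly on the $-v_F$ side of $H_{F,x}$, so the hypothesis of the proposition holds for this $0$-face hyperplane (with any radius $r$), yet for every $y=x+\delta(1,0)$ with $0<\delta\leq1$ the point $x'=x+\delta u\in X$ satisfies $D(y,x')=\tfrac9{10}\delta<\delta=D(y,x)$; hence $C_{-F,\varepsilon}(x)\cap V_{D,X}(x)=\emptyset$ for every $\varepsilon>0$ (the conclusion $\dim V_{D,X}(x)\geq1$ still holds there, but via a different face cone).

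So the strategy of exhibiting the cone at $x$ over $F$ itself only works when $H_{F,x}$ supports the $D$-ball along $F$ --- which is automatic when $F$ is a facet, where $v_F$ is the outer facet normal and $h$ is the value of the support function, but not for general $k$-faces, and the proposition is stated for all $k$. Your Case~2 (points of $X$ outside $\beta_r(x)$, handled by norm equivalence and a small $\varepsilon$) is fine. The paper argues differently: it places a small ball $B_{D,\varepsilon}(c)$ with $c$ inside the good half-ball so that $x$ lies in the relative interior of a $k$-face of that ball whose face hyperplane is the given one, and then invokes Proposition~\ref{nocon} at the auxiliary center $c$, producing a $(k+1)$-dimensional piece of the Voronoi cell near $c$ rather than at $x$; note that the possibility of such a placement is exactly the supporting-hyperplane property that your Case~1 takes for granted, so any repair of your argument would have to address this point explicitly rather than rely on the polar-dual bound.
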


\begin{figure}[H]
    \centering
    \includegraphics[scale=0.75]{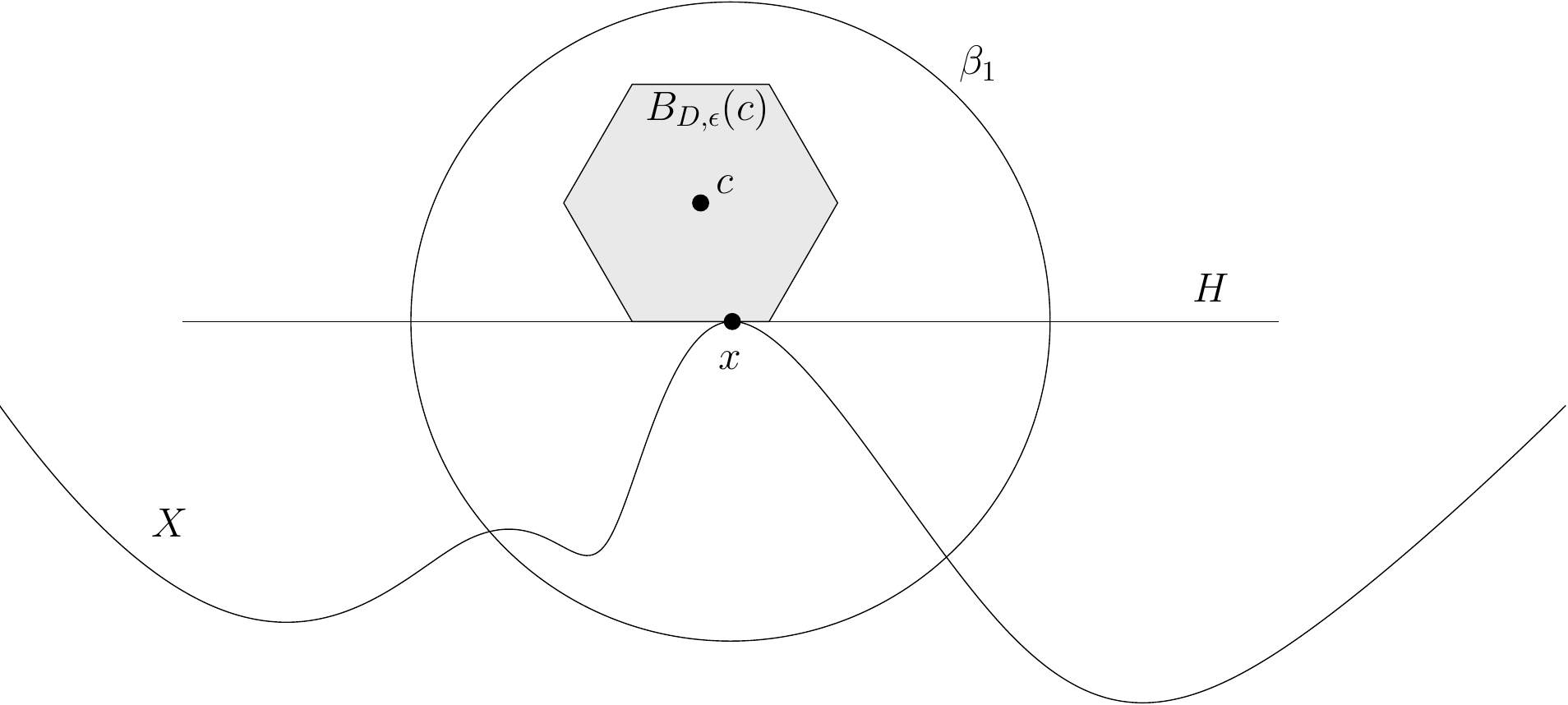}
    \caption{Example illustration of setup in proof of Proposition \ref{maximum} in the plane.}
    \label{fig:3.4}
\end{figure}

\begin{proof}
We denote with $H$ the $k$-face hyperplane that bisects $\beta_r(x)$ into   two closed half-balls, say $\beta_1$ and $\beta_2$.
Without loss of generality, $\beta_1 \cap X = \{ x \}$. 
We choose a point $c \in \beta_1$ and a sufficiently small $\varepsilon > 0$ such that  the following two conditions hold:
1) $B_{D,\varepsilon}(c) \subseteq \beta_1$
and 2) 
$x$ is in the relative interior of a $k$-face $F$ of $B_{D,\varepsilon}(c)$ with $H = H_{F,x}$; see Figure \ref{fig:3.4}.
In particular, this gives us a point $c$ in the Voronoi cell of $x$ such that $\partial B_{D,\varepsilon}(c)\cap  X = \{x\}$.
By Proposition \ref{nocon}, $V_{D,X}(x)$ has dimension at least $k+1.$
%Consider a Wasserstein ball $B_{d,\varepsilon}(c)$ such that $x$ is in the relative interior of $F$. We can choose the point $c \in \mathbf{1}_n$ and $\varepsilon$ small enough that $B_{d,\varepsilon}(c)\subset \beta_{r}(x)$.
%By construction, the hyperplane  $H_{F,x}$ splits $\beta_{r}(x)$ into two open half-balls $\beta'$ and $\beta''$ such that $B_{d,\varepsilon}(c)\cap \beta'=\emptyset$.
%Since $B_{d,\varepsilon}(c)\cap X = \{x\}$ it follows that $c\in V_{W_d,X}(x)$, and since  Otherwise a symmetric construction of $B_{d,\varepsilon}(c)$ such that $x$ is in the relative interior of $ -f$ will give us such a $c.$ 
%By Proposition \ref{nocon}, $V_{W_d,X}(x)$ has dimension at least $\dim(f)+1.$
\end{proof}

%\begin{proof}[bad proof]
%Since $x$ is a local  maximum there is some euclidean ball $E$ centered at $x$ of some radius $r$ such that $x$ is a global maximum for the set $A\cap E $. Let the unit Wasserstein ball have k as maximum euclidean distance from the origin.The face-cone of f at x is by construction the set of points $y_{\varepsilon,f}\in\{x+\varepsilon\overline{s};s\in -f, \varepsilon>0\}$ such that the Wasserstein ball $B_{\varepsilon,s}=\varepsilon B+y_{\varepsilon,s}$ intersects x at a point on f. Furthermore by maximality of x and convexity of B we have that $B_{\varepsilon,s}\cap A \cap E=x$.
%Finally the restriction of  the face-cone to $\{x+\varepsilon\overline{s};s\in -f, \frac{r}{2k}>\varepsilon>0\}$ is a d+1 dimensional object such that all points in the restriction are closer by the Wasserstein distance to x than any other point in $A\cup E^c$. 
%\end{proof}

We now aim to find upper bounds for the dimension of a Voronoi cell.
For this, we will determine which faces do not appear in the decomposition \eqref{eq: decomposition} of a given Voronoi cell. 
We restrict our investigation to sets $X$ that are algebraic varieties, but note that the statements in this section also apply to  manifolds.
Similarily to the construction of the $k$-face hyperplanes $H_{F,x}$, 
we define for every face $F$ of a $D$-ball and for every point $x \in \A^n$ the \emph{affine face space} $A_{F,x}$ to be the affine hull $\mathrm{span}(F)$ translated such that it passes through $x$.

%Assume we have  a smooth manifold $X$ and a point $x\in X$. Furthermore, let $y\in C_f(x)$ and $B_{d,\varepsilon}(y)$ be the distance ball such that $x$ is in the relative interior of the face $f$ of $B_{d,\varepsilon}(y)$.
%Our condition $(C)$ for a plane $P$ is that $x,y\in P$ and that there exists two non-parallel line segments $l_f,l_t\subset P$ such that $x$ is in the relative interior of each line segment, $l_f$ is in the face $f$ of $B_{D,\varepsilon}(y)$ and $l_t$ is in the tangent space of $A$ at $x$. This property might seem arbitrary but having the lemma formulated in this manner will make it easier to prove other statements about Voronoi cells later.

\begin{lemma}
\label{P}
 Let $X \subsetneq \A^n$ be an algebraic variety and $x\in X$ be a smooth point.
 If an affine face space $A_{F,x} \subsetneq \A^n$ through $x$ intersects $X$ at $x$ transversally, then the Voronoi cell $V_{D,X}(x)$  does not contain any point in the face cone $C_{F}(x)$.
\end{lemma}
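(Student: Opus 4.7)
The plan is to use transversality to exhibit, for each $y \in C_F(x)$, a tangent direction $v \in T_x X$ pointing strictly into the $D$-ball $B := B_{D,D(x,y)}(y)$ (which has $x$ on its boundary); then a smooth curve in $X$ tangent to $v$ supplies points of $X \setminus \{x\}$ strictly closer to $y$ than $x$, forcing $y \notin V_{D,X}(x)$.

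First I would unpack the face cone. Writing $y = x + \delta(s - x)$ with $s \in \inte(-F)$ and $\delta > 0$, translation invariance of $D$ shows that $x$ lies in the relative interior of the face $F_y := \delta(F - x) + y$ of $B$, and $\mathrm{aff}(F_y) = A_{F,x}$. Consequently the closed tangent cone $T_B(x)$ of the convex body $B$ at $x$ has $L := A_{F,x}^{\mathrm{dir}}$ as its maximal linear subspace; writing $\pi \colon \mathbb{R}^n \to \mathbb{R}^n / L$ for the projection, the quotient cone $\pi(T_B(x))$ is pointed and full-dimensional in $\mathbb{R}^n / L$.

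Next I would invoke the transversality hypothesis $T_x X + L = \mathbb{R}^n$: this makes the restriction $\pi|_{T_x X} \colon T_x X \to \mathbb{R}^n / L$ surjective, so I can choose $v \in T_x X$ with $\pi(v) \in \inte \pi(T_B(x))$. Since $\pi$ is an open map and $T_B(x) = \pi^{-1}(\pi(T_B(x)))$, this forces $v \in \inte T_B(x)$; in particular $v \neq 0$. Smoothness of $x$ then yields a smooth curve $\gamma \colon (-\varepsilon, \varepsilon) \to X$ with $\gamma(0) = x$ and $\gamma'(0) = v$. Because $v$ points strictly into $B$, the point $x + tv$ has Euclidean depth $\Theta(t)$ inside $B$ for small $t > 0$, whereas $\gamma(t) - (x + tv) = O(t^2)$; hence $\gamma(t) \in \inte B$ for all sufficiently small $t > 0$. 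This gives $D(\gamma(t), y) < D(x, y)$ with $\gamma(t) \in X \setminus \{x\}$, so $y \notin V_{D,X}(x)$.

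The main obstacle is the convex-geometric bookkeeping in the first step: establishing that $x$ sits in the relative interior of a face of $B$ whose affine hull coincides precisely with $A_{F,x}$, so that transversality of $X$ and $A_{F,x}$ translates into the existence of a tangent vector to $X$ strictly inside the local tangent cone of $B$ at $x$. Once that identification is in hand, the remainder is a short combination of elementary linear algebra (for surjectivity of $\pi|_{T_x X}$) and the standard fact that interior points of $B$ are strictly closer to $y$ than its boundary points.
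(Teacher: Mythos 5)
Your argument is correct, and it takes a genuinely more hands-on route than the paper's. The paper argues by contradiction: it restricts to the affine span $\mathbb{A}$ of $A_{F,x}$ and $y$, where $A_{F,x}$ becomes a hyperplane meeting $X':=X\cap\mathbb{A}$ transversally at the smooth point $x$, and then notes that $x$ being the closest point of $X$ to $y$ would make $x$ a smooth local minimizer of the linear functional $\langle v_F,\cdot\rangle$ on $X'$, contradicting the first-order criticality condition for optimizing a linear functional over a variety. You instead stay in the ambient space: you identify $x$ as a relative-interior point of a face of $B:=B_{D,D(x,y)}(y)$ whose affine hull is exactly $A_{F,x}$, use transversality (surjectivity of the quotient map by $L$ restricted to $T_xX$) to pick $v\in T_xX$ in the interior of the tangent cone of $B$ at $x$, and then run an explicit curve-plus-Taylor estimate (linear depth versus quadratic deviation) to produce points of $X\setminus\{x\}$ strictly inside $B$, hence strictly closer to $y$. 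Both proofs are first-order tangency arguments, but yours is more self-contained: it avoids the slicing step (and with it the paper's terse claim that $x$ is non-singular on $X'$), does not invoke the Lagrange-type criticality characterization, and makes explicit why a first-order condition suffices despite the corners of the polytope -- precisely because $v$ is chosen in the \emph{interior} of the tangent cone; the paper's version is shorter because it delegates exactly this point to the standard optimization fact. One small attribution issue: your identification of $F_y$ as a face of $B$ with $D(x,y)=\delta r$ uses not only translation invariance but also that $D$-balls of different radii are dilates of one another (i.e., that $D$ comes from a polyhedral norm); this is implicit throughout the paper (it is what makes the face cone $C_F(x)$ independent of the radius), so the step is legitimate, but it should be credited to homogeneity rather than to translation invariance alone.
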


\begin{proof}
We assume for contradiction that there exists a point $y\in V_{D,X}(x)\cap C_F(x).$
This means that $\partial B_{D,\varepsilon}(y) \cap X = \{ x \}$ for some $\varepsilon > 0$
and that $F$ is the unique face of $B_{D,\varepsilon}(y)$ whose relative interior contains $x$.

We now restrict our problem to the affine span of $A_{F,x}$ and $y$, denoted by $\mathbb{A}$.
Inside $\mathbb{A}$, the affine space $A_{F,x}$ is a hyperplane that intersects $X' := X \cap \mathbb{A}$ at $x$ transversally. 
Note that $x$ is non-singular on $X'$.
Moreover, since $x$ is the closest point to $y$, it is a local minimum for the linear functional defined by the vector $v_F$ orthogonal to $A_{F,x}$. However, this yields a contradiction since smooth critical points of the problem of optimizing a linear functional $\langle v_F,\cdot\rangle$ over an algebraic variety are precisely the points for which the tangent space is parallel to the orthogonal complement of $v_F$.

\end{proof}

By applying Lemma \ref{P} to all faces of large dimension, we obtain an upper bound on the dimension of a Voronoi cell.
\begin{theorem}
\label{transverse}
Let $X \subsetneq \A^n$ be an algebraic variety and $x\in X$ be a smooth point.
If an affine face space $A_{F,x}$ through $x$ intersects $X$ at $x$ non-transversally and no affine face space of larger dimension has this property, 
%Assume that $f$ is a face  of some Wasserstein ball $B_{D}(x)$ intersecting  $A$  at  $x$ non-transversally. If no other face of larger dimension has this property  
then the Voronoi cell $V_{D,X}(x)$ is of dimension at most $\dim F+1$.
\end{theorem}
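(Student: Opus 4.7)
The plan is to combine the face-cone decomposition \eqref{eq: decomposition} of the Voronoi cell with Lemma \ref{P}. The hypothesis is tailored so that Lemma \ref{P} eliminates every face cone coming from a face of dimension strictly greater than $\dim F$, and what remains yields the claimed dimension bound by a direct count.

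The first step is to invoke \eqref{eq: decomposition},
$$V_{D,X}(x) = \dot\bigcup_{G \subsetneq B_{D,r}(x)} \bigl(V_{D,X}(x) \cap C_G(x)\bigr),$$
and then to apply Lemma \ref{P} to each proper face $G$ with $\dim G > \dim F$. By the maximality assumption on $F$, every such $G$ has an affine face space $A_{G,x}$ that is a proper affine subspace of $\A^n$ (since $\dim A_{G,x} = \dim G \leq n-1$) and intersects $X$ transversally at $x$. Thus Lemma \ref{P} gives $V_{D,X}(x)\cap C_G(x) = \emptyset$ for every such $G$, and the union above may be restricted to faces with $\dim G \leq \dim F$.

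The second step is a dimension count for the surviving face cones. By definition, $C_G(x)$ is the cone from the apex $x$ over the relative interior of the opposite face $-G$, and $-G$ spans an affine subspace of dimension $\dim G$ that does not contain $x$ (since $-G$ lies in a supporting hyperplane of the ball, which separates it from the center $x$). Hence $\dim C_G(x) = \dim G + 1$, so $V_{D,X}(x)$ is contained in a finite union of sets of dimension at most $\dim F + 1$, which forces $\dim V_{D,X}(x) \leq \dim F + 1$. Once \eqref{eq: decomposition} and Lemma \ref{P} are in place the argument is essentially immediate; the only point requiring attention is to verify that the maximality hypothesis really does cover \emph{all} proper faces of dimension strictly greater than $\dim F$ (including facets), so no real obstacle is expected.
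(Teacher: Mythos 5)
Your proposal is correct and follows essentially the same route as the paper: apply Lemma \ref{P} to rule out every face cone $C_G(x)$ with $\dim G > \dim F$, then bound the dimension via the partition \eqref{eq: decomposition}. The only difference is that you spell out the count $\dim C_G(x) = \dim G + 1$ (using that $x$ lies off the affine span of $-G$), which the paper leaves implicit.
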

\begin{proof}
For any face $F'$ with $\dim F' > \dim F$, we have that $V_{D,X}(x) \cap C_F(x) = \emptyset$ by Lemma \ref{P}.
% By Lemma \ref{P} the face-cone component $V_{D,A}(x)\cap C_{f'}^\circ(x)$ associated with $f'$ is empty. 
 Recall that we can partition the Voronoi cell as in \eqref{eq: decomposition}. 
 Hence, we conclude that $\dim V_{D,X}(x) \leq \dim C_F(x) = \dim F +1$.
% All faces of dimension larger than $dim(f)$ intersect $A$ at $x$ transversally, so their corresponding face-cone components are empty. We are left with a Voronoi cell contained in the union of finitely many face-cones of faces of dimension dim(f) or lesser. We conclude that $V_D(x)$ is of dimension at most dim(f)+1.
\end{proof}

%For a smooth manifold of codimension 1, the tangent space at any point is of codimension 1 and non-transversal intersection is reduced to tangency. This gives us the following corollary.

%\begin{corollary}
%\label{tangent}
%Let $x$ be some point on a smooth manifold $A$ of dimension one less than the ambient space. If the face $f$ is the largest dimensional face of some Wasserstein ball such that $x\in f$ and $f$ is contained in the tangent space of  $A$  at $x$
%then the Voronoi cell $V_{D,A}(x)$ is of dimension at most dim(f)+1.
%\end{corollary}

%\begin{proof}
%If the tangent space is not contained in the span of $F$ then $F$ and the tangent vector together span the entire space and the intersection is therefore transverse. By theorem \ref{transverse} the Voronoi cell of the point x is then of dimension at most dim(f), not full. 
%\end{proof}

A first application of Theorem \ref{transverse} is to detect all full-dimensional cells at smooth points in the Voronoi diagram of a variety $X$.
For this, we note that for a facet $F$, the affine face space $A_{F,x}$ equals the facet hyperplane $H_{F,x}$.

\begin{corollary}
\label{facet}
Let $x$ be a smooth point on an algebraic variety $X \subsetneq \A^n$.  If the Voronoi cell $V_{D,X}(x)$ is full-dimensional, then 
%the tangent space of $X$ at $x$ is contained in 
one of the facet hyperplanes through $x$ is tangent to $X$ at $x$.
\end{corollary}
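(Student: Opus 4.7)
The plan is to deduce the corollary as a direct consequence of \Cref{transverse}, together with a short argument handling a degenerate case.

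First, I would show that under the full-dimensionality hypothesis there must exist a non-empty proper face $F$ of a $D$-ball at $x$ whose affine face space $A_{F,x}$ meets $X$ at $x$ non-transversally. Indeed, otherwise \Cref{P} would apply to every non-empty proper face, giving $V_{D,X}(x) \cap C_F(x) = \emptyset$ for all such $F$. Combined with the face cone decomposition in \eqref{eq: decomposition}, this would force $V_{D,X}(x) \subseteq \{x\}$, contradicting the assumption that the Voronoi cell is full-dimensional in $\A^n$.

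Next, I would pick such a face $F$ of maximal possible dimension and invoke \Cref{transverse} to conclude that $\dim V_{D,X}(x) \leq \dim F + 1$. Since $V_{D,X}(x)$ is full-dimensional and $F$ is a proper face of the $n$-dimensional polytope $B_{D,r}(x)$, this yields $n \leq \dim F + 1 \leq n$, so $\dim F = n-1$. Hence $F$ is a facet, and in this case the affine face space $A_{F,x}$ coincides with the facet hyperplane $H_{F,x}$.

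Finally, I would translate the non-transversality condition at a smooth point of the variety: a hyperplane $H_{F,x}$ meets $X$ non-transversally at a smooth point $x$ precisely when $T_x X \subseteq H_{F,x}$, which is exactly the statement that $H_{F,x}$ is tangent to $X$ at $x$. This yields the claim.

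I do not foresee a substantial obstacle; the only real subtlety is the edge case treated in the first paragraph, which rules out the situation where every non-empty proper face gives a transverse intersection. The remainder is a dimension count together with the standard equivalence between non-transversality and tangency for hyperplane sections.
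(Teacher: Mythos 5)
Your proof is correct and takes essentially the same route as the paper, which deduces the corollary directly from Theorem \ref{transverse}: full-dimensionality forces a non-transversally intersecting face of dimension $n-1$, i.e.\ a facet hyperplane $H_{F,x}$ tangent to $X$ at $x$. The only difference is that you explicitly handle, via Lemma \ref{P} and the decomposition \eqref{eq: decomposition}, the degenerate case in which every affine face space meets $X$ transversally, a point the paper's one-line proof leaves implicit.
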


\begin{proof}
By Theorem \ref{transverse}, for a Voronoi cell of dimension $n$, there has to be a face $F$ with $n \leq \dim F + 1$ (i.e., a facet) such that the hyperplane $A_{F,x} = H_{F,x}$ is tangent to $X$ at $x$.
\end{proof}

In general, the tangency of a facet hyperplane is only a necessary -- and not a sufficient -- condition for the full-dimensionality of a Voronoi cell. 
However, we can reverse Corollary \ref{facet} in the following sense: 
Once a hyperplane spanned by a facet $F$ is tangent, the assumptions in Propositions \ref{nocon} and \ref{maximum} become equivalent conditions to that the facet $F$ contributes a full-dimensional part of the Voronoi cell.

%\begin{lemma}
%\label{total}
%Assume we have some point $x$ on a smooth manifold A  and  a facet $F$ of $B_D$ such that $x$ is in the relative interior of $F$ and the tangent space of $A$ at $x$ is contained in the subspace spanned by $F$. Let $B_{D,\varepsilon}(c)$ be a distance ball such that x lies on the interior of F.
%In this situation the following are equivalent:
%\begin{itemize}
    %\item[(1)]  x is the closest point of A to the center c of $B_{D,\varepsilon}(c)$ w.r.t. the Wasserstein distance D.
 %\item[(2)]
 %x is a unique local maximum of A as in Lemma \ref{maximum} .
 % \item[(3)] There is some small neighborhood around x where the subspace spanned by F bisects the neighborhood such that all points of A other than x lie strictly in one of the bisected components. 
  %\item[(4)] The Voronoi cell $V_{D,A}(x)$ is full-dimensional and c is a point in that cell.
%\end{itemize}
%\end{lemma}

\begin{corollary}
\label{total}
Let $x$ be a smooth point on an algebraic variety $X \subsetneq \A^n$ such that the facet hyperplane $H_{F,x}$ is tangent to $X$ at $x$.
Then the following are equivalent:
\begin{itemize}
    \item[$(a)$] $V_{D,X}(x) \cap C_{F}(x) \neq \emptyset$ or $V_{D,X}(x) \cap C_{-F}(x) \neq \emptyset$ (which implies that the Voronoi cell is full-dimensional).
    \item[$(b)$] There exists $y\in V_{D,X}(x)$ and $\varepsilon>0$ such that $\partial B_{D,\varepsilon}(y)\cap X=\{x\}$ and $x$ is contained in the relative interior of a facet $F'$ of $B_{D,\varepsilon}(y)$, with $H_{F',x}=H_{F,x}$ (cf. \Cref{nocon}).
    \item[$(c)$] There exists $r>0$ such that all the points in $X\setminus\{x\}\cap \beta_r(x)$ lie in one open half ball obtained by bisecting the Euclidean ball $\beta_r(x)$  with $H_{F,x}$ (cf. \Cref{maximum}).
\end{itemize}
\end{corollary}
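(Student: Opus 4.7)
The plan is to establish the three-way equivalence by first setting up a geometric dictionary for points in the face cones associated to the hyperplane $H_{F,x}$, and then checking each implication against it. The dictionary reads: $y\in C_F(x)\cup C_{-F}(x)$ if and only if, for $\varepsilon:=D(x,y)$, the point $x$ lies in the relative interior of a facet $F'$ of $B_{D,\varepsilon}(y)$ with $H_{F',x}=H_{F,x}$. This follows from \Cref{conedef} together with the central symmetry and translation invariance of the polyhedral balls, and from the fact that $F$ being a facet forces $C_F(x)$ and $C_{-F}(x)$ to be the two full-dimensional open cones lying on opposite sides of $H_{F,x}$, exhausting all face cones whose defining facet has hyperplane class $H_{F,x}$.

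With this dictionary in hand, the equivalence $(a)\Leftrightarrow(b)$ is essentially tautological. For any $y\in V_{D,X}(x)$, the radius $\varepsilon=D(x,y)$ automatically satisfies $\partial B_{D,\varepsilon}(y)\cap X=\{x\}$, so the geometric condition in $(b)$ translates directly into $y\in C_F(x)\cup C_{-F}(x)$. The parenthetical claim in $(a)$ that the Voronoi cell is then full-dimensional follows from \Cref{nocon} applied with $\dim F = n-1$.

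For $(c)\Rightarrow(a)$, I would rerun the construction in the proof of \Cref{maximum}: let $\beta_1$ denote the closed half-ball of $\beta_r(x)$ bisected by $H_{F,x}$ that meets $X$ only at $x$. Since the face cone on the $\beta_1$-side of $H_{F,x}$ (which is $C_F(x)$ or $C_{-F}(x)$) is open and full-dimensional near $x$, one can pick $c\in \beta_1$ close to $x$ and $\varepsilon>0$ with $B_{D,\varepsilon}(c)\subseteq \beta_1$ such that $x$ is in the relative interior of a facet $F'$ of $B_{D,\varepsilon}(c)$ with $H_{F',x}=H_{F,x}$. This $c$ lies in $V_{D,X}(x)$ because $\partial B_{D,\varepsilon}(c)\cap X = \{x\}$, and by the dictionary $c\in C_F(x)\cup C_{-F}(x)$, establishing $(a)$.

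Conversely, for $(a)\Rightarrow(c)$, assume without loss of generality $y\in V_{D,X}(x)\cap C_F(x)$ and set $\varepsilon=D(x,y)$. Then $B_{D,\varepsilon}(y)\cap X = \{x\}$, and by the dictionary $x$ lies in the relative interior of a facet $F'$ of $B_{D,\varepsilon}(y)$ with $H_{F',x}=H_{F,x}$. Since the boundary of the polytope $B_{D,\varepsilon}(y)$ is affinely flat across the relative interior of $F'$, for small enough Euclidean radius $r>0$ the intersection $\beta_r(x)\cap B_{D,\varepsilon}(y)$ coincides with the closed half-ball of $\beta_r(x)$ lying on the $y$-side of $H_{F,x}$. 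The opposite open half-ball then contains no point of $X\setminus\{x\}$, as any such point would otherwise lie in $B_{D,\varepsilon}(y)$, proving $(c)$. I expect the main technical care to lie in formulating the dictionary in the first paragraph; once it is in place, each of the implications reduces to a short local argument exploiting the flatness of facets.
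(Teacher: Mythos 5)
Your argument is correct and follows essentially the same route as the paper: your ``dictionary'' is just the paper's direct verification of $(a)\Leftrightarrow(b)$ from the definition of the face cone, your $(c)\Rightarrow(a)$ reruns Proposition \ref{maximum} exactly as the paper does, and your flatness argument for $(a)\Rightarrow(c)$ is the paper's $(b)\Rightarrow(c)$ step in slightly different order. One sentence should be fixed: in your $(a)\Rightarrow(c)$ it is the \emph{closed} half-ball of $\beta_r(x)$ on the $y$-side of $H_{F,x}$ (the one contained in $B_{D,\varepsilon}(y)$) that contains no point of $X\setminus\{x\}$, because such a point would have $D$-distance at most $\varepsilon=D(x,y)$ from $y$, contradicting $y\in V_{D,X}(x)$; hence all points of $X\setminus\{x\}$ in $\beta_r(x)$ lie in the opposite open half-ball --- your text attributes the emptiness to the opposite open half-ball with that same justification, i.e., it swaps the roles of the two halves.
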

\begin{proof}
In the proof of Proposition \ref{nocon}, we show that the intersection of the Voronoi cell with the face cone $C_{F'}(x)$ is non-empty, which  gives us the implication $(b)$$\Rightarrow$$(a)$. 
Similarly, the proof of Proposition \ref{maximum} yields $(c)$$\Rightarrow$$(a)$.

To show $(a)$$\Rightarrow$$(b)$, consider any point $y\in  V_{D,X}(x) \cap C_{F}(x)$. 
%Since $y\in C_{F}(x)$, then there exists $\varepsilon>0$ s.t. $\partial B_{d,\varepsilon}(y)$ intersects $X$ in $x$ in the relative interior of $F$. Since $y\in V_{W_d,X}(x)$, we have that $\partial B_{d,\varepsilon}(y)\cap X = \{x\}$.\\
Since $y$ is in the face cone of $F$,
the facet $F$ of some $D$-ball centered at $y$ contains $x$ in its relative interior.
Moreover, since $y$ is also in the Voronoi cell, the boundary of that ball intersects $X$ only at $x$.

To see $(b)$$\Rightarrow$$(c)$, we choose a Euclidean ball $\beta_r(x)$ small enough so that its intersection with $H_{F,x}$ is contained in $F$.
Furthermore, we can choose it small enough such that one of its open half balls is contained in $B_{D,\varepsilon}(y)$.
Since $\partial B_{D,\varepsilon}(y)\cap X=\{x\}$ and $y\in V_{D,X}(x)$, that half ball does not intersect $X$. Hence, all points in $X \setminus \lbrace x \rbrace \cap \beta_r(x)$ lie in the other half ball.
\end{proof}

\begin{remark}
For arbitrary (lower-dimensional) faces,
a weaker version of Corollary \ref{total} holds.
Let us consider a smooth point $x$ on a variety $X$ and an affine face space $A_{F,x}$ of maximal dimension such that it intersects $X$ at $x$ non-transversally. 
Recall that  $\dim V_{D,X}(x)\leq \dim F + 1$ by Theorem \ref{transverse}.
In this setting, 
the conditions $(a)$ and $(b)$ in Corollary \ref{total} are still equivalent, after replacing 
``full-dimensional'' in $(a)$ with ``$(\dim F +1)$-dimensional''
as well as ``facet'' and ``$H_{F',x}=H_{F,x}$'' in $(b)$ with ``face'' resp. ``$A_{F',x}=A_{F,x}$''. 
This can be seen with the same proofs as above.
Similarly, $(c)$ still implies $(a)$ and $(b)$.
However, the reverse implication is generally not true.
A simple counterexample is a plane curve $X$ with a smooth point $x$ of inflection and an appropriately chosen $D$-ball with vertex $F$.
\end{remark}

\section{Wasserstein-Voronoi cells for the Hardy-Weinberg curve}
\label{sec: 4}

We now turn our attention to a concrete example. We examine full-dimensional Voronoi cells for the planar \emph{Hardy-Weinberg curve} under varying Wasserstein distances. 
We show that any Wasserstein-Voronoi diagram of that curve has one, two, or three full-dimensional Voronoi cells.
The Hardy-Weinberg curve is the set of distributions we find by recording the total number of ``heads" obtained from tossing a biased coin twice.
It is parametrized by the bias of the coin, i.e., the probability of the outcome of a single experiment being ``head".
Tossing the coin more than twice, we obtain the \emph{Veronese curves} that are parametrized by   
\begin{align}\label{eq: parametrization Veronese}
    \varphi_n~:~&[0,1]\to \Delta_{n}, \nonumber\\ 
    & p\mapsto 
    \left(p^n, np^{n-1}(1-p),\binom{n}{2}p^{n-2}(1-p)^2,\dots,\binom{n}{n-1}p(1-p)^{n-1},\binom{n}{n}(1-p)^n\right),
    %\left(\binom{n}{i}p^i(1-p)^{n-i}\right)_{i=0}^n
\end{align}
where $\Delta_{n}=\{(t_0,\cdots, t_n)\in \mathbb{R}^{n+1}_{\geq 0}~:~t_0+\cdots+t_n=1\}.$

In order to detect all full-dimensional Voronoi cells of the Hardy-Weinberg curve $\varphi_2([0,1])$, we make use of \Cref{facet} which requires us to find all facet hyperplanes (in our case, \emph{edge lines}) of a given Wasserstein ball that are tangent to the curve.
Recall that planar Wasserstein balls have at most six edges (see Example \ref{example1}).

\begin{lemma}\label{lem: HW cases}
    Let $d = \begin{pmatrix}
0 & d_{12} & d_{13}\\
d_{12} & 0 & d_{23}\\
d_{13} & d_{23} & 0
\end{pmatrix}$. Each of the three directional vectors of the edges of the Wasserstein ball of $d$ is tangent at at most one interior point of the Hardy-Weinberg curve (i.e., to $\varphi_2((0,1))$).
Tangency occurs under the following conditions:
\begin{itemize}
    \item[$(a)$] The vector $\dfrac{e_1-e_2}{d_{12}}-\dfrac{e_1-e_3}{d_{13}}$ is tangent if and only if $d_{12}>d_{13}$.
    \item[$(b)$] The vector $\dfrac{e_1-e_3}{d_{13}}-\dfrac{e_2-e_3}{d_{23}}$ is tangent if and only if $d_{23}>d_{13}$.
    \item[$(c)$] The vector $\dfrac{e_1-e_2}{d_{12}}- \dfrac{e_3-e_2}{d_{23}}$ is  tangent for any $d$.
\end{itemize}
\end{lemma}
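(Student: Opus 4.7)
My approach is a direct computation: for each of the three edge directions $v$ in (a)--(c), I impose the parallelism $\varphi_2'(p)\parallel v$ and then analyze for which $d$ the resulting $p$ lies in $(0,1)$. Differentiating the Hardy--Weinberg parametrization gives
\[
    \varphi_2'(p)=\bigl(2p,\;2(1-2p),\;-2(1-p)\bigr),
\]
which is nonzero for every $p\in(0,1)$. Both this vector and each of the three $v$'s lie in the zero-sum hyperplane $\{x_1+x_2+x_3=0\}\subset \mathbb{R}^3$, so the parallelism condition reduces from two independent equations to a single linear equation in $p$. This alone gives the ``at most one interior tangency point'' claim for each direction, since a linear equation has a unique solution.

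Next, equating (say) the ratios of the first two coordinates of $\varphi_2'(p)$ and $v$ produces an explicit rational formula $p=p(d)$ in each case. The denominator is $d_{13}-2d_{12}$ in (a), $2d_{23}-d_{13}$ in (b), and the manifestly positive $d_{12}+d_{23}$ in (c). Case (c) is then immediate: the resulting fraction has positive numerator, positive denominator, and numerator strictly smaller than denominator whenever $d_{12},d_{23}>0$, so $p\in(0,1)$ unconditionally, as claimed.

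Cases (a) and (b) are settled by a short sign-chase on the two inequalities $p>0$ and $p<1$. Splitting on whether the possibly-ambiguous denominator is positive or negative, the system in each case collapses to the single clean inequality stated in the lemma: $d_{12}>d_{13}$ in (a) and $d_{23}>d_{13}$ in (b). The mildly delicate point is case (a), where both numerator $d_{13}-d_{12}$ and denominator $d_{13}-2d_{12}$ can change sign independently, giving four subcases to enumerate; in three of them the requirement $p\in(0,1)$ fails, and only $d_{12}>d_{13}$ (where both are negative) survives. Throughout, the only facts used about $d$ are the positivity $d_{ij}>0$; the triangle inequalities play no role, and no real obstacle arises — the whole argument is organized linear algebra plus sign-chasing.
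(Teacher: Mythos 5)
Your proposal is correct and follows essentially the same route as the paper: compute $\varphi_2'(p)=(2p,\,2-4p,\,2p-2)$, impose parallelism with each edge direction (a single linear equation in $p$, since both vectors lie in the zero-sum plane), solve for the unique candidate $p^*$, and check when $p^*\in(0,1)$ by the same sign analysis, yielding $d_{12}>d_{13}$, $d_{23}>d_{13}$, and no condition, respectively. The only cosmetic difference is that the paper disposes of case (b) by the symmetry of the Hardy--Weinberg curve under swapping the first and third coordinates, while you carry out the computation directly.
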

\begin{proof}
The tangent vector at the point $\varphi_2(p)$ of the curve is parametrized by 
\begin{equation}\label{eq: tangent vectors to HW}
\left(2p,\, 2-4p,\, 2p-2\right).
\end{equation}

We start by considering the edge with directional vector as in $(a)$. The points $\varphi_2(p^*)$ of the curve whose tangent vector is parallel to the edge are the solutions of the system
\[
 (2p,\, 2-4p,\,2p-2)=\lambda(d_{12}-d_{13},\,d_{13},\,-d_{12}),
\]
for some $\lambda \neq 0$, from which we obtain

\[
    p^* = \dfrac{d_{12}-d_{13}}{2d_{12}-d_{13}}.
\]

In particular, $p^*\in (0,1)$ if and only if $d_{12}>d_{13}$. Statement $(b)$ follows by symmetry, as the Hardy-Weinberg curve is invariant under permutation of the first and the third coordinate.

For the edge specified in $(c)$,  the points $\varphi_2(p^*)$ of the curve whose tangent vector is parallel are the solutions of the system
\[
 (2p, 2-4p,2p-2)=\lambda(d_{23},d_{12}-d_{23},-d_{12})
\]
for some $\lambda \neq 0$, from which we obtain
\[
    p^* = \dfrac{d_{23}}{d_{12}+d_{23}}.
\]
Since $d_{ij}> 0$, we see that $p^*\in (0,1)$.
\end{proof}

We now divide the space $(d_{12},d_{13},d_{23})$ of metrics on $[3]= \lbrace 1,2,3 \rbrace$ according to the number of full-dimensional Voronoi cells  of the Hardy-Weinberg curve. 
\begin{theorem}
\label{lemmaloong}
The number of full-dimensional Voronoi cells of a Wasserstein-Voronoi diagram of the Hardy-Weinberg curve in the simplex $\Delta_2$ is 
\[
    \begin{cases}
        1 & \text{if } d_{13}>\max\{d_{12},d_{23}\}\\
        2 & \text{if } d_{12}<d_{13}<d_{23} \text{ or } d_{23}<d_{13}<d_{12}\\
        3 & \text{if } d_{13}<\min\{d_{12},d_{23}\}
    \end{cases}.
\]
\end{theorem}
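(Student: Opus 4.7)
My plan is to combine the necessary condition of Corollary~\ref{facet} with the sufficient condition of Corollary~\ref{total}$(c)$, applied to the Hardy--Weinberg conic. Corollary~\ref{facet} forces any full-dimensional Voronoi cell at a smooth point $x$ of an algebraic variety $X\subsetneq\A^n$ to have some facet hyperplane through $x$ tangent to $X$ at $x$. Since the Hardy--Weinberg curve is the smooth conic parametrized by $\varphi_2$, and facets of a planar Wasserstein ball are edges, this reduces the count to enumerating edge-line tangencies of the Wasserstein ball to the curve.

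Next, I will enumerate tangencies at interior points $\varphi_2((0,1))$. By Lemma~\ref{lem: HW cases}, the three distinct edge directions $(a)$, $(b)$, $(c)$ are tangent at a unique interior point of the curve under the precise conditions $d_{12}>d_{13}$, $d_{23}>d_{13}$, and ``always'', respectively; moreover the three tangent points are pairwise distinct because at each point of the curve only one line direction is tangent. For any interior tangent point $x^*$, strict convexity of the conic implies that the curve lies locally on a single side of its tangent line, so for sufficiently small $r>0$ condition $(c)$ of Corollary~\ref{total} is met with the tangent edge line bisecting the Euclidean ball $\beta_r(x^*)$. Hence each active case among $(a)$, $(b)$, $(c)$ contributes exactly one full-dimensional Voronoi cell. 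A direct case analysis of which of $(a)$, $(b)$, $(c)$ are active under the three hypothesized regimes on $(d_{12},d_{13},d_{23})$ yields the counts $1$, $2$, and $3$ claimed in the theorem.

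The main obstacle I expect is excluding full-dimensional cells at the endpoints $e_1=\varphi_2(1)$ and $e_3=\varphi_2(0)$, which are smooth points of the underlying conic but marginal points of the segment in $\Delta_2$. For these, I would re-run the tangency computation with the endpoint tangent vectors $(2,-2,0)$ and $(0,2,-2)$ obtained from \eqref{eq: tangent vectors to HW} against the three edge direction vectors in Lemma~\ref{lem: HW cases}. One finds that parallelism at $e_1$ forces either $d_{13}=d_{23}$ or a vanishing coordinate of $d$ (which contradicts the metric axioms), and parallelism at $e_3$ similarly forces $d_{12}=d_{13}$ or a vanishing coordinate. Under each of the three strict regimes stated in the theorem, none of the equalities $d_{12}=d_{13}$ and $d_{13}=d_{23}$ holds, so no edge line is tangent at either endpoint, and Corollary~\ref{facet} rules out any additional full-dimensional cells. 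This endpoint verification is precisely where the strictness of the inequalities in the hypotheses is used.
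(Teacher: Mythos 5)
Your argument is essentially the paper's own: necessity of tangency from Corollary~\ref{facet}, sufficiency from Corollary~\ref{total}$(c)$ using that the conic lies on one side of each of its tangent lines, and the case count from the tangency conditions in Lemma~\ref{lem: HW cases}. Your additional explicit verification that under the strict inequalities no edge direction is tangent at the endpoints $\varphi_2(1)=e_1$ and $\varphi_2(0)=e_3$ (where the equalities $d_{13}=d_{23}$, resp.\ $d_{12}=d_{13}$, would be forced) is a detail the paper's proof passes over silently, and your computation there is correct.
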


\begin{proof}
We know from Corollary \ref{facet} that full-dimensional Voronoi cells can only appear if the tangent line of a point on the curve is the affine hull of an edge of the Wasserstein  ball. 
Since the Hardy-Weinberg curve is a smooth conic, it lies in one of the half-planes bisected by the tangent line at any point.
Hence, by Corollary \ref{total}$(c)$, there is a full-dimensional Voronoi cell at a point if and only if its tangent vector is parallel to an edge.
%By \Cref{lem: HW cases} we see that all points on the curve in the simplex have distinct tangent vectors. 
 \Cref{lem: HW cases} provides linear conditions describing this tangency for each of the edge directions.
 %of the edges of the Wasserstein ball to be tangent to the curve in the interior of the simplex. 
 The statement follows combining those conditions.
\end{proof}

We note that all three cases in Theorem \ref{lemmaloong} can occur for hexagonal Wasserstein balls (i.e., not just for degenerate balls with four edges), as the following example demonstrates.

\begin{example}\label{ex: example Wasserstein}
Under the three Wasserstein distances associated with the metrics
\[
d_1 = \begin{pmatrix}
0 & 1 & 1\\
1 & 0 & 1\\
1 & 1 & 0
\end{pmatrix},\quad 
d_2 = \begin{pmatrix}
0 & 2 & 3\\
2 & 0 & 4\\
3 & 4 & 0
\end{pmatrix},\quad \text{ and }
d_3 = \begin{pmatrix}
0 & 2 & 1\\
2 & 0 & 2\\
1 & 2 & 0
\end{pmatrix},
\]
the Hardy-Weinberg curve has one, two, or three full-dimensional Voronoi cells, respectively.
This can be seen immediately from Theorem \ref{lemmaloong}.
Following the calculations in the proof of Lemma \ref{lem: HW cases}, 
we can find the full-dimensional Voronoi cells at the points
$p = \dfrac{1}{2}$,  $p \in \lbrace \dfrac{2}{3}, \dfrac{4}{5} \rbrace$, and $p \in\lbrace  \dfrac{1}{3}, \dfrac{1}{2}, \dfrac{2}{3} \rbrace$, respectively;
see \Cref{fig: examples full dim}.
\end{example}

\begin{figure}[H]
    \centering
    \includegraphics[scale=0.5]{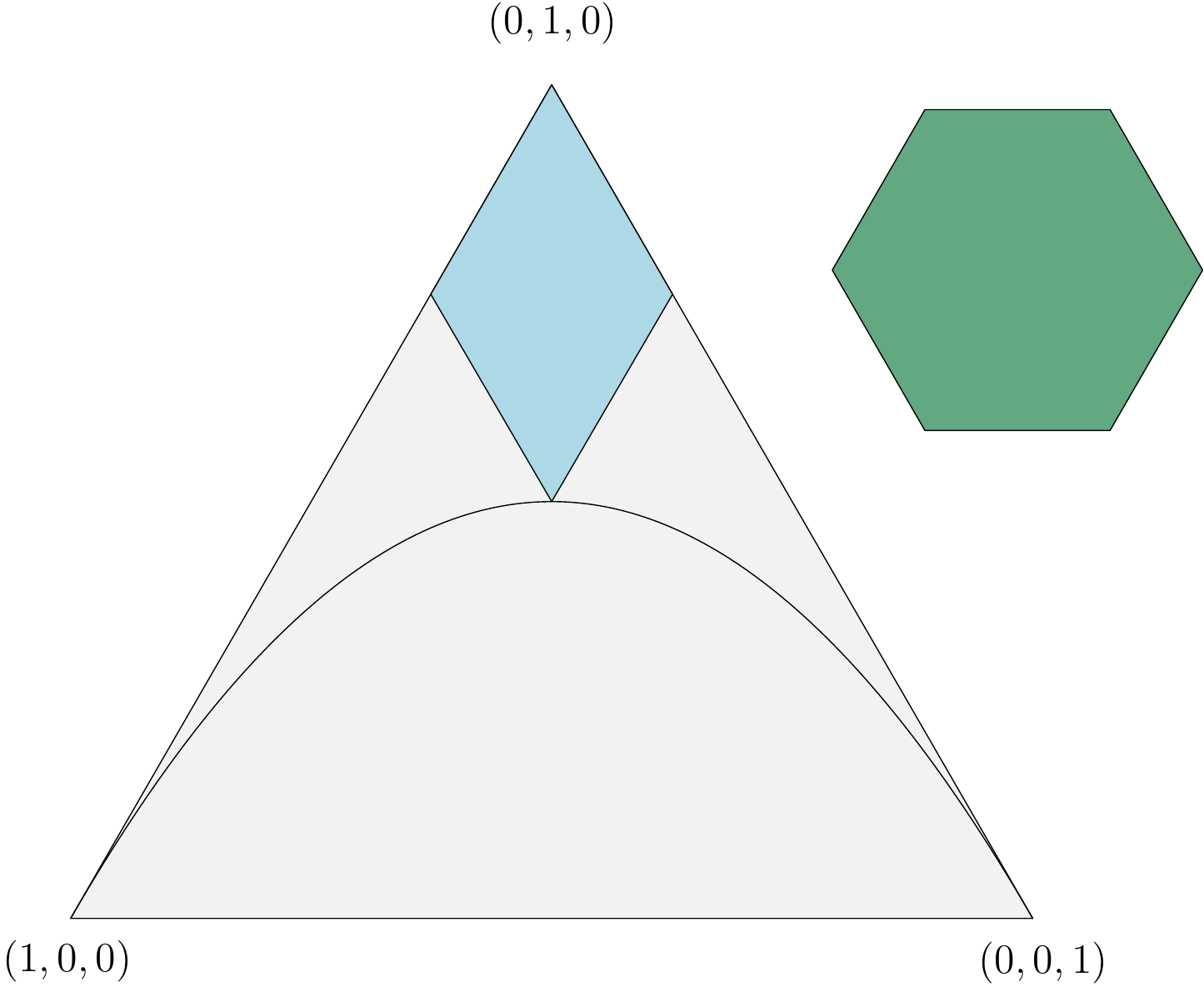}\quad\includegraphics[scale=0.5]{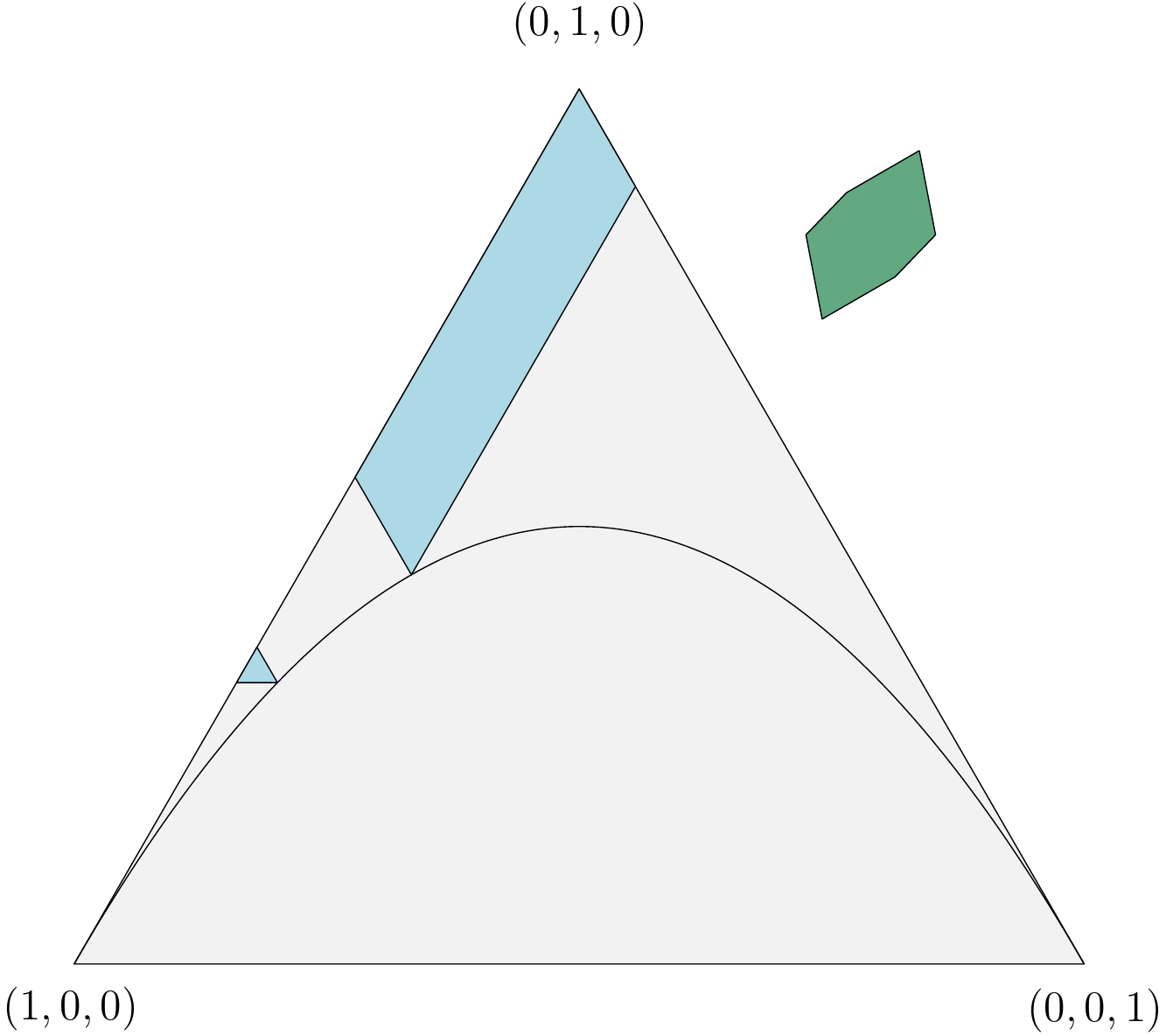}\quad\includegraphics[scale=0.5]{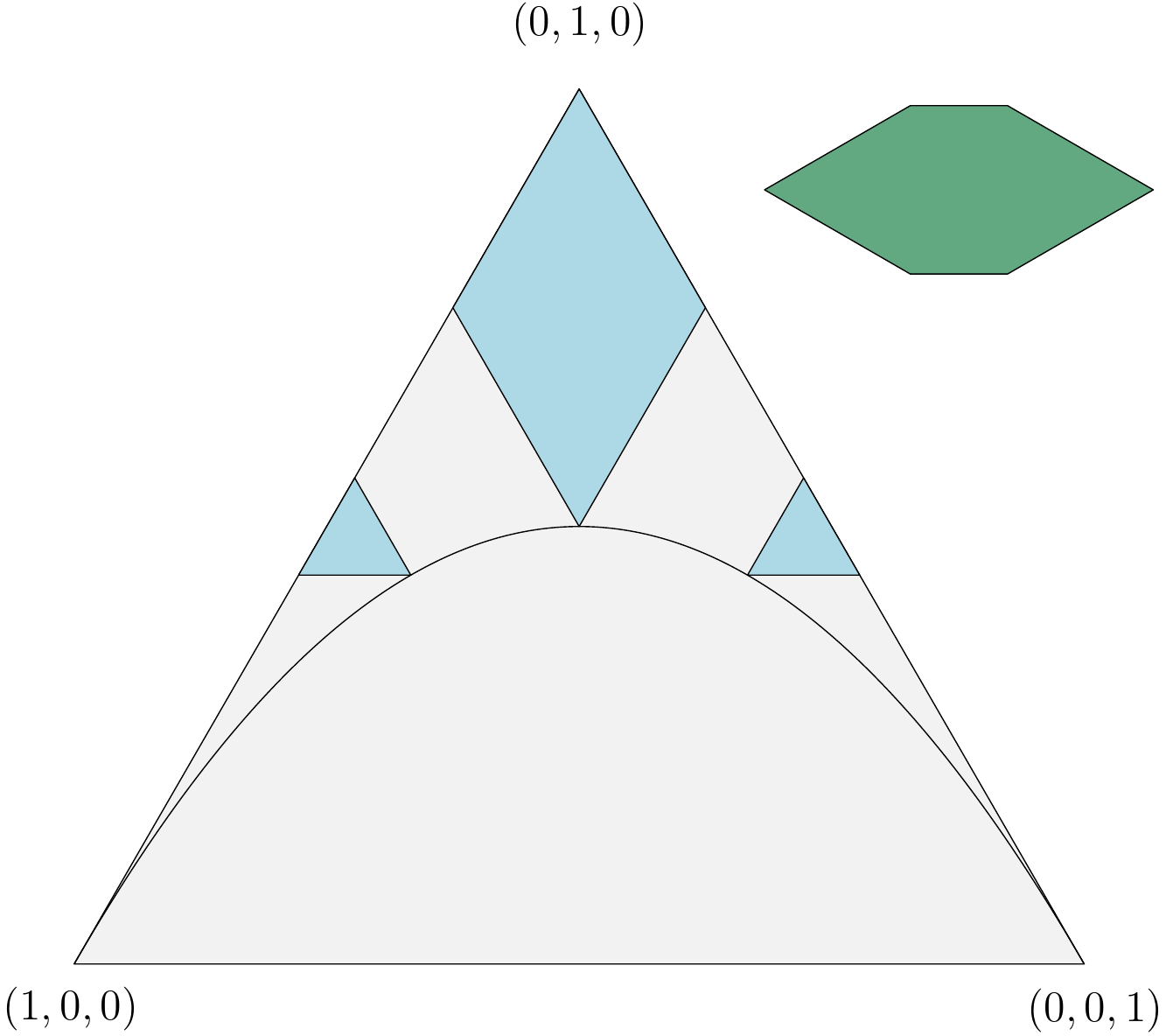}
    \caption{Full-dimensional Voronoi cells of the Hardy-Weinberg curve under the Wasserstein distances $W_{d_1}$ (top left), $W_{d_2}$ (top right), and $W_{d_3}$ (bottom row) from \Cref{ex: example Wasserstein}. 
    The respective Wasserstein balls are depicted in green.}
    \label{fig: examples full dim}
\end{figure}

In the toy example of the Hardy-Weinberg curve, \Cref{lemmaloong} gives a complete understanding of the space of Wasserstein metrics in terms of the number of full-dimensional Voronoi cells. A result of this form would be desirable for 
any family of statistical models. We propose the following problem.
\begin{problem}
    Let $C_n=\varphi_n([0,1])\subseteq \Delta_{n}$ be the Veronese curve parametrized as in \eqref{eq: parametrization Veronese}. Find a decomposition of the space of metrics on $[n+1]$ such that the metrics in every region yield Wasserstein-Voronoi diagrams with the same number of full-dimensional cells. 
\end{problem}

\section{Counting full-dimensional cells}
\label{sec: 5}
In this section, we find an upper bound for the number of full-dimensional Voronoi cells of smooth irreducible varieties under a polyhedral distance $D$ on $\A^n$. 
Our main tool is projective duality.
%We then present a separate but connected approach in the language of polar varieties and polar degrees that has been used in studying optimisation problems for Wasserstein distances in  \cite{CJMSV}.
%For reference of the theory on dual varieties used in this chapter see \cite{polarbook}.
%
%So far we have studied Voronoi diagrams of manifolds  in affine spaces. 
Hence, even though we are interested in real affine varieties $X\subseteq \A^n$, we pass to their complex projective closure $\overline{X}\subseteq\mathbb{P}^{n}$. We denote by $(\mathbb{P}^{n})^*$ the \emph{dual} projective space, i.e., the set of hyperplanes of $\mathbb{P}^{n}$.
%However to apply all the theory in this chapter we need to work with closed irreducible varieties in complex projective space. We will however be able to draw conclusion about our real affine variety $X\subset \mathbf{1}_n$ from what we learn about its complex projective closure $\overline{X}.$
%\section{Dual variety}
%We can find a projective space $P^*$ for a projective space $P$ by associating hyperplanes in $P$ with points in the projective space $P^*$, and a point in $P$ with the hyperplane in $P^*$ of all hyperplanes in through the given point. This in fact gives us a duality between the two spaces where $P^*$ is known as the projective dual space of $P$.
For a subvariety $\overline{X} \subseteq \mathbb{P}^n$, we consider  the set of all hyperplanes that are tangent to $\overline{X}$ at some smooth point.
The \emph{dual variety} $\overline{X}^\vee \subseteq (\mathbb{P}^n)^*$ of $X$ is the Zariski closure of that set inside $(\mathbb{P}^n)^*$.

%The closed irreducible projective variety $\overline{X}\subset P$  can now be said to have a projective dual variety $\overline{X}^\vee\in P^*$, the Zariski closure of the set of all hyperplanes tangent to $\overline{X}$ at some smooth point.

To state the main theorem of this section, we also introduce the notation $F(P)$ for the number of facets of a polytope $P \subseteq \A^n$.

 \begin{theorem}
 \label{polarmax}
Let $X\subseteq \A^n$ be a smooth irreducible variety such that the dual variety $\overline{X}^\vee$ of its complex projective closure $\overline{X}$ is a hypersurface in $(\mathbb{P}^n)^*$. 
The number of full-dimensional Voronoi cells of $X$ under a polyhedral distance $D$ is at most
\begin{align}
\label{eq:upperBoundCounting}
    \frac{F(B_D) \, \deg(\overline{X}^\vee)}{2}.
\end{align}
 \end{theorem}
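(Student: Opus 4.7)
The plan is to combine \Cref{facet} with projective duality. By \Cref{facet}, a full-dimensional Voronoi cell at a smooth point $x\in X$ forces some facet hyperplane $H_{F,x}$ to be tangent to $X$ at $x$. Because $B_D$ is centrally symmetric, its $F(B_D)$ facets split into $F(B_D)/2$ opposite pairs $\{F,-F\}$, and each pair determines a single affine hyperplane through $x$ via $H_{F,x}=H_{-F,x}$. Hence the number of full-dimensional cells is bounded by $\sum_{\{F,-F\}}N_F$, where $N_F$ is the number of smooth $x\in X$ with $H_{F,x}$ tangent to $X$ at $x$. It suffices to prove $N_F\leq \deg(\overline{X}^{\vee})$ for each pair.

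To bound $N_F$, I would fix a representative $F$ with normal $v_F$. As $x$ ranges over $\A^n$, the affine hyperplanes $H_{F,x}$ all share the normal $v_F$, so their projective closures form a one-parameter pencil, that is, a projective line $L_F\subseteq (\mathbb{P}^n)^*$. A smooth tangent point $x$ with $H_{F,x}$ tangent to $X$ at $x$ yields a point $(x,\overline{H_{F,x}})$ of the conormal variety $\mathcal{C}(\overline{X})\subseteq \mathbb{P}^n\times (\mathbb{P}^n)^*$ whose second coordinate lies in $L_F$; distinct tangent points give distinct such points (they differ in the first coordinate, regardless of whether the corresponding hyperplanes coincide). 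By the hypersurface assumption on $\overline{X}^{\vee}$, the second projection $\pi_2\colon\mathcal{C}(\overline{X})\to \overline{X}^{\vee}$ is generically birational (biduality). The projection formula then gives, for the $0$-cycle $\mathcal{C}(\overline{X})\cap (\mathbb{P}^n\times L_F)$,
\[
    \deg\!\bigl(\mathcal{C}(\overline{X})\cdot (\mathbb{P}^n\times L_F)\bigr) \;=\; \deg\!\bigl(\pi_{2*}[\mathcal{C}(\overline{X})]\cdot [L_F]\bigr) \;=\; \deg\!\bigl([\overline{X}^{\vee}]\cdot [L_F]\bigr) \;=\; \deg(\overline{X}^{\vee}),
\]
the last equality being B\'ezout's theorem for a line and a hypersurface of degree $\deg(\overline{X}^{\vee})$ in $(\mathbb{P}^n)^*$. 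Since every geometric intersection point contributes multiplicity at least one, $N_F\leq \deg(\overline{X}^{\vee})$, and summing over the $F(B_D)/2$ pairs of opposite facets yields the claimed bound.

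The main obstacle is making the intersection-theoretic step precise: the projection formula and B\'ezout's theorem apply cleanly only when $\mathcal{C}(\overline{X})\cap (\mathbb{P}^n\times L_F)$ is zero-dimensional, i.e.\ when $L_F\not\subseteq \overline{X}^{\vee}$. The degenerate case $L_F\subseteq \overline{X}^{\vee}$ would correspond to an atypical alignment between the polyhedral norm $B_D$ and the variety $X$ -- every parallel hyperplane in the direction $v_F$ would be tangent somewhere on $\overline{X}$. This case has to be either excluded by a small perturbation argument or handled by noting that the sufficiency conditions of \Cref{total} still prune the contributing tangent points down to a finite set. The remaining ingredients -- applying \Cref{facet}, exploiting the central symmetry of $B_D$, and identifying a pencil of parallel affine hyperplanes with a projective line in $(\mathbb{P}^n)^*$ -- are routine.
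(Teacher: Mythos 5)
Your argument is the same as the paper's in the generic case (pair up opposite facets, turn each pair into a line $L_F\subseteq(\mathbb{P}^n)^*$, intersect with the conormal variety, use biduality and the degree of $\overline{X}^\vee$), but the non-generic case, which you correctly flag as the main obstacle, is left genuinely unresolved -- and the theorem is stated for an \emph{arbitrary} polyhedral distance, so this is where the actual work lies. Three concrete problems: (i) your identification of the degenerate case is wrong: $\mathcal{C}(\overline{X})\cap(\mathbb{P}^n\times L_F)$ can be positive-dimensional even when $L_F\not\subseteq\overline{X}^\vee$, namely when a \emph{single} hyperplane with normal $v_F$ is tangent to $\overline{X}$ along a positive-dimensional locus, so ``zero-dimensional iff $L_F\not\subseteq\overline{X}^\vee$'' fails; (ii) the ``small perturbation argument'' is not available, since $D$ is fixed in the statement and perturbing $B_D$ changes which points are tangent and which cells are full-dimensional, so a bound for a nearby generic $D$ gives no bound for the given one; (iii) the suggestion that \Cref{total} ``prunes the contributing tangent points down to a finite set'' is precisely the claim that needs proof, and it is not routine.

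For comparison, the paper deals with the excess intersection in three steps. It splits $\mathcal{N}_i$ into its positive-dimensional part $\mathcal{Z}_i$ and the remaining isolated points $\mathcal{N}_i'$, and uses a refined statement (cited as \cite[Proposition 2.1]{jiang2021linear}) to get $|\mathcal{N}_i'|\le\deg\overline{X}^\vee$ even in the presence of excess components -- plain B\'ezout plus the projection formula, as in your write-up, only controls the proper-intersection case. Components of $\mathcal{Z}_i$ on which one hyperplane is tangent at infinitely many points of $X$ are shown not to produce full-dimensional cells, because a facet lying in such a hyperplane with $x$ in its relative interior would place infinitely many points of $X$ on the boundary of the $D$-ball (the argument from \Cref{thm:noFullCell}). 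The remaining components are curves dominating $L_i^\vee$; projecting to $\overline{X}$ gives a curve all of whose tangent lines meet the codimension-two space $L_i$ at infinity, and \Cref{lem:degenerateCurve} forces that curve to lie in $L_i$ itself, hence at infinity, so it contributes no tangencies on the affine variety $X$. Without these steps (or substitutes for them), your inequality $N_F\le\deg(\overline{X}^\vee)$ is unproven exactly in the degenerate situations the theorem must cover.
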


Before we give a proof for Theorem \ref{polarmax}, we investigate its assumption that the dual variety is a hypersurface.
We show now that there are no full-dimensional Voronoi cells without this assumption.
Thus, Theorem \ref{polarmax} captures all smooth varieties will full-dimensional cells in its Voronoi diagram.

\begin{theorem}
\label{thm:noFullCell}
Let $X\subseteq \A^n$ be an irreducible variety and $D$ be a polyhedral distance on $\A^n$.
If the dual variety $\overline{X}^\vee$ of the complex projective closure $\overline{X}$ is not a hypersurface in $(\mathbb{P}^n)^*$, 
no smooth point $x \in X$ has 
a full-dimensional Voronoi cell with respect to $D$.
\end{theorem}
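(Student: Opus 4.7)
The plan is to argue by contradiction, combining the necessary tangency condition from Corollary \ref{facet} with the fact that tangentially degenerate varieties are ruled by linear subspaces.

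Suppose $V_{D,X}(x_0)$ is full-dimensional at some smooth $x_0 \in X$. From the partition \eqref{eq: decomposition}, full-dimensionality forces $V_{D,X}(x_0) \cap C_F(x_0) \neq \emptyset$ for some facet $F$ of $B_D$. The contrapositive of Lemma \ref{P} then gives that $H_{F,x_0} = A_{F,x_0}$ is tangent to $X$ at $x_0$, so $[H_{F,x_0}] \in \overline{X}^\vee$. Applying the implication $(a) \Rightarrow (c)$ of Corollary \ref{total}, we obtain $r > 0$ such that $X \cap H_{F,x_0} \cap \beta_r(x_0) = \{x_0\}$.

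I now exploit the codimension hypothesis to contradict this. The projective conormal variety $\mathrm{Con}(\overline{X}) \subseteq \overline{X} \times (\mathbb{P}^n)^*$ is irreducible of dimension $n-1$, and its second projection $\pi$ surjects properly onto $\overline{X}^\vee$. Since $\dim \overline{X}^\vee \leq n-2$ by hypothesis, the standard fiber-dimension theorem (every component of every fiber of a proper surjective morphism of irreducible varieties has dimension at least $\dim(\text{source}) - \dim(\text{target})$) implies that the irreducible component of $\pi^{-1}([H_{F,x_0}])$ through $(x_0, [H_{F,x_0}])$ has dimension at least $n - 1 - \dim \overline{X}^\vee \geq 1$. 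Its image $Z \subseteq \overline{X}$ under the first projection is a positive-dimensional irreducible subvariety through $x_0$, contained in $H_{F,x_0}$ (since a tangent hyperplane always contains its points of tangency) and whose smooth points have tangent space contained in $H_{F,x_0}$.

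To finish, I need a real point of $Z$ in $\beta_r(x_0) \setminus \{x_0\}$. For this I invoke the structure theorem for contact loci of tangentially degenerate projective varieties (Griffiths--Harris, Landman, Zak): $X$ is ruled by linear subspaces of dimension $n - 1 - \dim \overline{X}^\vee$ along which the Gauss map is constant, with the linear subspace $L$ through any smooth point canonically attached to that point. Naturality of this assignment over $\mathbb{R}$, together with the fact that $x_0$ is a real point, shows that $L$ is defined over $\mathbb{R}$; since tangents are constant along $L$, equal to $T_{x_0} X \subseteq H_{F,x_0}$, the affine subspace $L$ sits inside $H_{F,x_0}$. The real locus of $L$ is then a real affine subspace of dimension $\geq 1$ inside $X \cap H_{F,x_0}$ passing through $x_0$, and meets $\beta_r(x_0) \setminus \{x_0\}$, contradicting the first paragraph. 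The main obstacle is precisely this last step: a positive-dimensional complex-algebraic contact locus need not have real dimension greater than zero at $x_0$, so one genuinely needs the linearity and real-structure statements from the theorem on tangentially degenerate varieties to produce real points beyond $x_0$ itself.
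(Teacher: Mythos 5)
Your reduction to a tangency statement is correct and mirrors the paper's: full-dimensionality plus the decomposition \eqref{eq: decomposition}, Lemma \ref{P} and Corollary \ref{total} give a facet hyperplane $H=H_{F,x_0}$ tangent to $X$ at $x_0$ and a radius $r>0$ with $X\cap H\cap\beta_r(x_0)=\{x_0\}$. Your conormal-variety/fiber-dimension argument producing a positive-dimensional complex contact locus $Z\subseteq\overline{X}\cap H$ through $x_0$ is also sound (and in fact pins the positive-dimensional locus at the specific point $x_0$ more explicitly than the paper's appeal to the ruling of $\overline{X}$ from \cite[Chapter 1, Corollary 1.2]{gkz}).

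The gap is exactly the step you flag, and your proposed fix does not work. The ruling/contact linear subspace through a smooth point of a dual-defective variety is in general \emph{not} canonically attached to that point: for the Segre variety $\mathbb{P}^1\times\mathbb{P}^2\subseteq\mathbb{P}^5$ (smooth, irreducible, dual defect one) the contact loci of tangent hyperplanes through a point $(p,q)$ are all lines of the form $\{p\}\times\ell$ with $\ell\ni q$ -- a one-parameter family -- so there is no distinguished $L$ to which a naturality/descent argument could apply, and realness of some contact space through $x_0$ does not follow. (Relatedly, the Gauss map of a smooth projective variety is finite by Zak's theorem, so it is not constant along these contact loci when $X$ is smooth and defective; that part of your cited structure theorem conflates Gauss fibers with contact loci, though it is not actually needed, since $L\subseteq\overline{X}$ linear through the smooth point $x_0$ already forces $L\subseteq\mathbb{T}_{x_0}\overline{X}\subseteq H$.) Note also that $Z$ being defined over $\mathbb{R}$ (which it is, as $X$ and $H$ are real) does not help, since a real positive-dimensional variety may have $x_0$ as an isolated real point -- precisely the obstacle you name. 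The paper produces real points near $x_0$ by a different mechanism: biduality applied to the real pair $(x_0,H)$. The real point $q=[H]\in\overline{X}^\vee$ has a real tangent space of dimension at most $n-2$, so the real hyperplanes of $(\mathbb{P}^n)^*$ containing it form a positive-dimensional real family; each of them is a real point of $(\overline{X}^\vee)^\vee=\overline{X}$, and together they form a positive-dimensional real projective subspace through the point of tangency, giving real points of $X$ on $H$ arbitrarily close to $x_0$ and hence the contradiction. To complete your argument you need a realness mechanism of this kind (or some other source of real points of $X\cap H$ accumulating at $x_0$); the canonicity you invoke is not available.
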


\begin{proof}
Since the dual variety has codimension larger than one (say, codimension $c+1$), the complex projective variety $\overline{X}$ is \emph{ruled} (by projective spaces of dimension $c$); in other words, $\overline{X}$ is the union of ($c$-dimensional) projective spaces  \cite[Chapter 1, Corollary 1.2]{gkz}.
Moreover, the real part of $\overline{X}$, including $X$, is ruled by real spaces of positive dimension.
Indeed, for a (generic) real point $p \in \overline{X}$, we consider a real hyperplane tangent at $p$.
That hyperplane corresponds to a real point $q \in \overline{X}^\vee$.
Since the dual variety is not a hypersurface, there is a positive-dimensional family of real hyperplanes  tangent at $q$.
These hyperplanes correspond to real points on $\overline{X}$ where they form a positive-dimensional projective space passing through $p$.

Now let us assume for contradiction that there is a smooth point $x \in X$ with a full-dimensional Voronoi cell.
For any point $y$ in the Voronoi cell, there is a $D$-ball $B_{D,\varepsilon}(y)$ that intersects $X$ exactly at $x$.
By Corollaries \ref{facet} and \ref{total},
$y$ can be chosen such that $x$ is contained in the relative interior of some facet $F$ of the ball $B_{D,\varepsilon}(y)$ and such that $F$ is tangent to $X$ at $x$.
Since $X$ is ruled by real affine  spaces of positive dimension, the tangent space of $X$ at $x$ contains such an affine  space passing through $x$.
In particular, the relative interior of the tangent facet $F$ contains infinitely many points on $X$, which contradicts that $B_{D,\varepsilon}(y) \cap X = \lbrace x \rbrace$.
%
%Let $p\in X$ be a point having a full-dimensional Voronoi cell $V_{W_d,X}(p)$. It is necessary for $p$ to be an interior point of a facet $F$ of some Wasserstein ball $B_D(y)$ such that $B_D(y)\cap X=\{p\}.$ We examine the hyperplane F is contained in which corresponds to a real point $p'$ in the dual space. Since $\overline{X}^\vee$ is not a hypersurface neither is its real part. Because of the low dimension of this variety we can find a family of hyperplanes tangent to $p'$ which correspond to a line on $X$ through $p$. This line shows that $F$ contains  points of $X$ other than $p$ so $B_D(y)\cap X\neq\{p\}.$
\end{proof}

The proof of Theorem \ref{polarmax} for a \emph{generic} polyhedral distance (i.e., such that the facet hyperplanes of $B_D$ are sufficiently generic given the variety $X$) is relatively straightforward (as well will see below). 
To address arbitrary polyhedral distances, we need the following basic lemma from projective geometry.

\begin{lemma}
    \label{lem:degenerateCurve}
    Let $n \geq 2$, let $L \subseteq \mathbb{P}^n$ be a projective subspace of codimension two, and let $C \subseteq \mathbb{P}^n$ be an irreducible curve.
    If all tangent lines of $C$ intersect $L$, the curve $C$ is contained in one of the hyperplanes containing $L$.
\end{lemma}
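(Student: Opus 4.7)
The plan is to use the projection from $L$ to reduce the statement to a question about ramification of a morphism between smooth curves. The hyperplanes of $\mathbb{P}^n$ containing $L$ form a pencil, so projection from $L$ defines a rational map $\pi : \mathbb{P}^n \dashrightarrow \mathbb{P}^1$ that is a morphism on $\mathbb{P}^n \setminus L$, whose fiber over a point is the unique hyperplane through $L$ and any chosen preimage. The conclusion of the lemma is that the restriction of $\pi$ to $C$ is constant, so I will argue by contradiction.

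If $C \subseteq L$, then $C$ lies in every hyperplane through $L$ and we are done, so assume $C \not\subseteq L$; then $C \cap L$ is finite and $C$ has infinitely many smooth points $p \notin L$. Suppose $\pi|_C$ is non-constant. The key geometric observation is that, for a smooth point $p \in C \setminus L$ with tangent line $T_pC$, the line $T_pC$ meets $L$ if and only if $T_pC$ is contained in the fiber $H_p$ of $\pi$ through $p$: indeed, $T_pC$ already passes through $p \in H_p$, and a line in $\mathbb{P}^n$ through a point of the hyperplane $H_p$ lies in $H_p$ if and only if it has a second point in $H_p$; since $L \subseteq H_p$ is a hyperplane of $H_p$, the line $T_pC$ meets $L$ exactly when it sits in $H_p$. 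Translating this through the differential of $\pi$, whose kernel at $p$ is the tangent direction of $H_p$, we see that the tangency hypothesis is equivalent to the vanishing of $d(\pi|_C)_p$ at every smooth point $p \in C \setminus L$.

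Now I pass to the normalization $\nu : \tilde{C} \to C$, which is a smooth projective curve, and extend $\pi \circ \nu$ to a morphism $\tilde{\pi} : \tilde{C} \to \mathbb{P}^1$ (the indeterminacy locus is finite on $\tilde{C}$). Under the assumption that $\pi|_C$ is non-constant, $\tilde{\pi}$ is a non-constant morphism of smooth projective curves over $\mathbb{C}$, hence finite and, in characteristic zero, generically \'etale. Therefore its ramification locus is a finite set of points of $\tilde{C}$. On the other hand, for every smooth point $p \in C \setminus L$, the point $\nu^{-1}(p) \in \tilde{C}$ is a ramification point of $\tilde{\pi}$ by the differential calculation above. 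Since there are infinitely many such $p$, this yields the desired contradiction and forces $\pi|_C$ to be constant, i.e., $C$ lies in a single hyperplane containing $L$.

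The main obstacle, which I have addressed in the second paragraph, is the equivalence between a tangent line meeting $L$ and the vanishing of the differential of the projection; the rest is standard curve geometry. One minor subtlety is verifying that the tangency assumption of the lemma refers to tangent lines at smooth points of $C$ (so that the differential argument is well defined), but this causes no harm since removing the finitely many singular points of $C$ still leaves infinitely many ramification points in $\tilde{C}$.
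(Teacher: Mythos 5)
Your proof is correct, but it takes a genuinely different route from the paper. You project from $L$ itself onto the pencil $\mathbb{P}^1$ of hyperplanes through $L$, observe that the hypothesis ``every tangent line meets $L$'' translates into the vanishing of the differential of $\pi|_C$ at every smooth point of $C\setminus L$ (your equivalence $T_pC\cap L\neq\emptyset \Leftrightarrow T_pC\subseteq H_p \Leftrightarrow d(\pi|_C)_p=0$ is sound, since $p\notin L$ forces a second intersection point of $T_pC$ with $H_p$), and then conclude on the normalization that a non-constant map $\tilde{C}\to\mathbb{P}^1$ cannot have infinitely many ramification points in characteristic zero. The paper instead projects from a generic codimension-one subspace $P\subsetneq L$ onto $\mathbb{P}^2$, so that $L$ maps to a point; the image curve then has all tangent lines through that point, its dual variety lies in a line, and biduality forces the image to be a point or a line, which pulls back to the desired hyperplane. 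The paper's argument stays inside the projective-duality framework that drives the whole section and needs no normalization or ramification theory; your argument avoids dual varieties and biduality altogether and makes the characteristic-zero (separability) hypothesis explicit, at the cost of invoking the normalization and the finiteness of the ramification locus. Minor wording quibbles only: the kernel of $d\pi_p$ is the full tangent space $T_pH_p$ of the fiber rather than a single ``tangent direction'', and one should note at the end that constancy of $\pi|_C$ gives $C\setminus L\subseteq H$ for a single member $H$ of the pencil, whence $C\subseteq H$ by taking closures; neither point affects the validity of the proof.
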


\begin{proof}
    We consider the projection $\pi: \mathbb{P}^n \dashrightarrow \mathbb{P}^2$ from a generic subspace $P$ of $L$ with codimension one.
    The Zariski closure $C_2$ of $\pi(C)$ is either a point or a plane curve whose tangent lines all pass through the point $\pi(L)$.
    In either case, the dual variety $C_2^\vee \subseteq (\mathbb{P}^2)^*$ is contained in a line.
    Hence, $C_2$ itself must be either a point or a line that passes through $\pi(L)$.
    In particular, the preimage $\pi^{-1}(C_2)$ is a projective subspace of $\mathbb{P}^n$ that is contained in a hyperplane passing through $L$.
    Now the assertion follows from the inclusion $C \subseteq \pi^{-1}(C_2)$. 
\end{proof}

Now we are ready to prove the main theorem of this section.

 \begin{proof}[Proof of Theorem \ref{polarmax}]
If $n=1$, the only variety $X$ satisfying the assumptions is a single point. 
Moreover, the only one-dimensional polytopes are line segments, so $F(B_D)=2$. 
This proves the assertion in dimension one, and we assume $n \geq 2$ in the following.

 We write $k:= \dfrac{F(B_D)}{2}$ for the number of pairs of opposite facets of the $D$-balls $B_D$, and enumerate the facet pairs $\left( (F_1,-F_1), (F_2,-F_2), \ldots, (F_k,-F_k) \right)$. 
For each pair $(F_i,-F_i)$ of opposite facets, we consider its one-dimensional family of parallel hyperplanes.
In the projective space $\mathbb{P}^n$, that is a family of hyperplanes that contain a projective subspace $L_i \subseteq \mathbb{P}^n$ of codimension two that lies at infinity.
In the dual projective space $(\mathbb{P}^n)^*$, the family corresponds to the line $L_i^\vee$.
We slightly abuse notation and write $H \in L_i^\vee$ for a hyperplane $H \subseteq \mathbb{P}^n$ containing $L_i$.

 By Corollary \ref{facet}, if the Voronoi cell of $x\in X$ is full-dimensional, then a hyperplane 
$H \in L_i^\vee$ (for some $1 \leq i \leq k$) contains the tangent space $\mathbb{T}_{x}\overline{X} \subseteq \mathbb{P}^n$ of $\overline{X}$ at $x$.
We now investigate all such occurrences of tangency:
\begin{align*}
    \mathcal{N}_i := \left\lbrace (x,H) \in \overline{X} \times (\mathbb{P}^n)^* \;:\; \mathbb{T}_{x}\overline{X} \subseteq H, \, H \in L_i^\vee \right\rbrace.
\end{align*}
The set $\mathcal{N}_i$ is the intersection of the \emph{conormal variety}
$\mathcal{N}_{\overline{X},\overline{X}^\vee}:= \{ (x,H) \in \overline{X} \times (\mathbb{P}^n)^* : \mathbb{T}_{x}\overline{X} \subseteq H\}$
with the line $L_i^\vee \subseteq (\mathbb{P}^n)^*$ (more formally, with $\mathbb{P}^n \times L_i^\vee$).
Note that the dual hypersurface $\overline{X}^\vee$ is the image of the projection of the conormal variety onto the second factor.
Hence, if the line $L_i^\vee$ was generic, then the set $\mathcal{N}_i$ would be finite of cardinality $\deg  \overline{X}^\vee$.
Thus, for a sufficiently generic polyhedral distance $D$, we would be done, since Corollary \ref{facet} implies that the number of full-dimensional Voronoi cells is at most $\sum_{i=1}^k |\mathcal{N}_i| = k \cdot \deg  \overline{X}^\vee$.

For  an arbitrary polyhedral distance $D$, the varieties $\mathcal{N}_i$ might be infinite.
We write $\mathcal{N}_i = \mathcal{Z}_i \cup \mathcal{N}'_i$, where $\mathcal{Z}_i$ is the positive-dimensional components of $\mathcal{N}_i$ and $\mathcal{N}'_i$ contains the remaining points of $\mathcal{N}_i$.
If $\mathcal{Z}_i$ is empty, then $\mathcal{N}_i = \mathcal{N}'_i$ contains $ \deg  \overline{X}^\vee$ many  points, counted with multiplicity.
Otherwise, there are less than $ \deg  \overline{X}^\vee$ many  points in $\mathcal{N}'_i$.
The latter was shown for the intersection of a subvariety of $\mathbb{P}^m$ with a subspace of complementary dimension in \cite[Proposition 2.1]{jiang2021linear}, but the proof works the same for subvarieties of a product of projective spaces.
Hence, in either case, we conclude that $|\mathcal{N}'_i| \leq \deg  \overline{X}^\vee$.

In the remainder of this proof, we will show that the positive-dimensional components $\mathcal{Z}_i$ of $\mathcal{N}_i$ do not contribute to full-dimensional Voronoi cells, i.e., 
that a full-dimensional Voronoi cell at $x \in X$ implies that $(x,H) \in \mathcal{N}'_i$ for some hyperplane $H$.
This concludes the proof, as it implies that the number of full-dimensional Voronoi cells is at most $\sum_{i=1}^k |\mathcal{N}'_i| \leq k \cdot \deg  \overline{X}^\vee$.

First, we consider irreducible components of $\mathcal{Z}_i$ where (at least) one of the hyperplanes $H$ is tangent at infinitely many points $x$ of $X$.
As in second paragraph of the proof of Theorem \ref{thm:noFullCell}, the tangent hyperplane $H$ does not cause a full-dimensional Voronoi cell at any such point $x$, 
since any $D$-ball with a facet $F$ that is contained in $H$ and has $x$ in its relative interior also has infinitely many other points on its boundary.

After removing all such components from $\mathcal{Z}_i$, the only remaining components in $\mathcal{Z}_i$ (if any) are curves:
The projection of each such curve $\Gamma \subseteq \mathcal{Z}_i \subseteq \overline{X} \times L_i^\vee$ onto the second factor is the whole line $L_i^\vee$, and 
for every hyperplane $H \in L_i$ there are finitely many points $x \in \overline{X}$ such that $(x,H) \in \Gamma$.
The projection of $\Gamma$ onto the first factor is a curve $C \subseteq \overline{X}$.
Every tangent line of $C$ is contained in one of the hyperplanes $H \in L_i$, which means that the tangent line intersects the projective subspace $L_i \subseteq \mathbb{P}^n$ of codimension two.
By Lemma \ref{lem:degenerateCurve}, the curve $C$ must be contained in one of the hyperplanes $H \in L_i$.
Since there are only finitely many points $x \in C$ with $(x,H) \in \Gamma$, for each of the remaining points $x' \in C$ there must be another hyperplane $H' \in L_i$ such that  $(x',H') \in \Gamma$.
In particular, we obtain that $x' \in H \cap H' = L_i$.
This shows that the whole curve $C$ is in fact contained in $L_i$.
Hence, the curve $C$ lies at infinity and not in the affine ambient space $\mathbb{A}^n$, but we are only interested in Voronoi cells of the affine variety $X\subseteq \mathbb{A}^n$.
 \end{proof}

If we restrict ourselves to specific polyhedral distances, the bound in \Cref{polarmax} can be specialized accordingly. For instance,
in an $n$-dimensional metric space, the number of $(m-1)$-dimensional faces of a generic Wasserstein ball is 
 $
 \frac{(n+m)!}{(m!)^2(n-m)!}
 $ \cite{GP}.
 In particular, a generic Wasserstein ball has $\binom{2n}{n}$ many facets.
 In general, for arbitrary Wasserstein balls $B_{W_d}$, that number is an upper bound for the number of facets, i.e., $F(B_{W_d})\leq\binom{2n}{n}$ \cite{GP}. This yields the following result.
\begin{corollary}
    Let $X\subseteq \mathbf{1}_n$ be a smooth irreducible variety such that the dual variety $\overline{X}^\vee$ of its complex projective closure $\overline{X}$ is a hypersurface in $(\mathbb{P}^n)^*$. 
    The number of full-dimensional Voronoi cells of $X$ under a Wasserstein distance is at most
    \[
       \binom{2n}{n}\frac{ \deg(\overline{X}^\vee)}{2}.
    \]
\end{corollary}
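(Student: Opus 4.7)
The plan is straightforward: this corollary is an immediate specialization of \Cref{polarmax} to the family of Wasserstein distances. The affine hyperplane $\mathbf{1}_n$ is a real affine $n$-space embedded in $\mathbb{R}^{n+1}$, and by definition each Wasserstein distance $W_d$ is a polyhedral distance on $\mathbf{1}_n$. Under the hypotheses of the corollary -- smoothness and irreducibility of $X$ together with the assumption that $\overline{X}^\vee$ is a hypersurface in $(\mathbb{P}^n)^*$ -- \Cref{polarmax} applies verbatim with $\A^n = \mathbf{1}_n$ and $D = W_d$, yielding the upper bound
\[
\frac{F(B_{W_d})\,\deg(\overline{X}^\vee)}{2}
\]
on the number of full-dimensional Voronoi cells of $X$.

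The second step is to insert the combinatorial bound on the number of facets of a Wasserstein ball that is recalled in the paragraph preceding the statement: by \cite{GP}, one has $F(B_{W_d}) \leq \binom{2n}{n}$, with equality achieved in the generic case. Substituting this estimate into the inequality from \Cref{polarmax} produces precisely the asserted bound $\binom{2n}{n} \deg(\overline{X}^\vee)/2$. There is no real obstacle to anticipate: the only items to verify are that the hypotheses of \Cref{polarmax} transfer directly (which they do, since they are identical in formulation) and that the bound on $F(B_{W_d})$ is applied to an arbitrary, possibly degenerate, Wasserstein ball -- which is exactly the content of the cited inequality in \cite{GP}.
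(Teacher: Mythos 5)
Your proposal is correct and matches the paper's own reasoning: the corollary is obtained exactly by applying \Cref{polarmax} with $\A^n = \mathbf{1}_n$ and $D = W_d$, then substituting the facet bound $F(B_{W_d}) \leq \binom{2n}{n}$ for arbitrary Wasserstein balls from \cite{GP}, as recalled in the paragraph preceding the statement.
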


\begin{remark}
% More generally, a face $f$ of a generic Wasserstein ball $B_D$ can also produce $(f+1)$-dimensional Voronoi cells and Theorem \ref{transverse} shows that non-transversal intersection is needed for this to happen. This means that the dimension of the space spanned by the tangent space of $X$ at some point $x$ and a parallel translate of the span of $f$ is less than the dimension $n$ of the ambient space. Comparing this with the definition of the polar varieties (\ref{polarvariety}), we see that $i=k-n+\text{dim}(f)+1$ is sufficent. More precisely, the parallel translates of $\mathrm{span}(f)$ in the affine ambient space correspond to projective subspaces of dimension dim$(f)$ that contain a common subspace $V$ of dimension dim$(f)-1$ that lies in the hyperplane at infinity. Hence, if non-transversal intersection happens at a point $x \in X \subset \overline{X}$, then $x$ must be in $P_i(\overline{X},V)$. In other words, the points $x \in X$ such that the face-cone component of the Voronoi cell $V_{D,X}(x)$ corresponding to $f$ is not empty are all contained in the $i$'th polar variety of $\overline{X}$ with respect to $V$.
 %
 In general, we expect infinitely many lower-dimensional Voronoi cells (i.e., of dimension smaller than $n$), so we cannot hope to count them with algebraic invariants, e.g., with \emph{polar degrees} that generalize $\deg \overline{X}^\vee$. On a related matter, polar degrees provide an upper bound for the number of critical points of computing the Wasserstein distance from a point to the variety \cite[Theorem 13 and Proposition 17]{CJMSV}. 
 Figure \ref{fig:3.4} displays a critical point of this kind.
  \end{remark}

We conclude with an example where the bound in Theorem \ref{polarmax} is tight. 
In general, the quantity \eqref{eq:upperBoundCounting} is only an upper bound since it counts how often a facet is tangent to the variety, which is a necessary but not sufficient condition for a full-dimensional Voronoi cell (cf. Corollary \ref{total}).
  In addition, going from the real affine variety $X$
 to its  complex projective closure $\overline{X}$ can introduce additional tangency points.

 \begin{example}
 Fix any polyhedral distance and consider the $n$-sphere in an $(n+1)$-dimensional ambient space. 
 It has another $n$-sphere as its dual variety which has degree two.
 This means that the upper bound \eqref{eq:upperBoundCounting} on the number of full-dimensional Voronoi cells equals the number of facets $F(B_D)$ of the $D$-ball $B_D$.
 
 We also know that for every hyperplane there are exactly two 
 parallel translates that are tangent to the $n$-sphere. 
 For any such tangent hyperplane,  one of its two closed halfspaces intersects the $n$-sphere only at the point of tangency.
 Therefore, we can conclude from Corollary \ref{total}$(c)$ that every facet of the $D$-ball contributes one full-dimensional Voronoi cell. 
 Hence, there are exactly $F(B_D)$ many full-dimensional Voronoi cells, as predicted by the upper bound \eqref{eq:upperBoundCounting} in Theorem \ref{polarmax}.
 \end{example}

 \bigskip  \bigskip  \bigskip
\paragraph{\textbf{Acknowledgements}}
Kathlén Kohn was partially supported by the Knut and Alice Wallenberg Foundation within their WASP (Wallenberg AI, Autonomous Systems and Software Program) AI/Math initiative.
Lorenzo Venturello was supported by the G\"oran Gustafsson foundation.

\bibliographystyle{alpha} 
\bibliography{bibliography.bib}

\end{document}